\newtheorem{theorem}{Theorem}
\newtheorem{lemma}{Lemma}
\newtheorem{proposition}{Proposition}
\newtheorem{definition}{Definition}
\newcommand{\tr}{\operatorname{tr}}
\newcommand{\pr}{p}
\newcommand{\Id}{\operatorname{Id}}
\newcommand{\End}{\operatorname{End}}
\newcommand{\ad}{\operatorname{ad}}
\newcommand{\Aut}{\operatorname{Aut}}
\newcommand{\dbar}{\bar{\partial}}
\newcommand{\imag}{\mathop{{\fam0 {\textbf{i}}}}\nolimits}
\newcommand{\CC}{{\mathbb C}}
\newcommand{\PP}{{\mathbb P}}
\newcommand{\RR}{{\mathbb R}}
\renewcommand{\)}{\right)}
\newcommand{\vol}{\operatorname{vol}}
\newcommand{\defeq}{\mathrel{\mathop:}=} 
\newcommand{\surj}{\to\kern-1.8ex\to}
\newcommand{\lto}{\longrightarrow}
\newcommand{\lra}[1]{\stackrel{#1}{\longrightarrow}}
\newcommand{\cA}{\mathcal{A}}
\newcommand{\cC}{\mathcal{C}}
\newcommand{\cE}{\mathcal{E}}
\newcommand{\cJ}{\mathcal{J}}
\newcommand{\cJi}{\mathcal{J}^{i}}
\newcommand{\cK}{\mathcal{K}}
\newcommand{\cP}{\mathcal{P}}
\newcommand{\cF}{\mathcal{F}}
\newcommand{\cY}{\mathcal{Y}}
\newcommand{\cX}{\mathcal{X}}
\newcommand{\cG}{\mathcal{G}}
\newcommand{\cL}{\mathcal{L}}
\newcommand{\cO}{\mathcal{O}}
\newcommand{\Lie}{\operatorname{Lie}}
\newcommand{\LieG}{\operatorname{Lie} \cG}
\newcommand{\LieX}{\operatorname{Lie} \ctG}
\newcommand{\cH}{\mathcal{H}} 
\newcommand{\LieH}{\Lie\cH}
\def\a{\gamma}
\def\b{\beta}
\def\om{\omega}
\def\Om{\Omega}
\def\del{\partial}
\def\delb{\overline{\partial}}
\def\vol{\mathrm{vol}}
\def\Lie{\mathrm{Lie}}
\def\Id{\mathrm{Id}}
\def\cA{\mathcal{A}}
\def\cB{\mathcal{B}}
\def\cJ{\mathcal{J}}
\def\ctG{\widetilde{\mathcal{G}}}
\def\cG{\mathcal{G}}
\def\cH{\mathcal{H}}
\def\cP{\mathcal{P}}
\def\cT{\mathcal{T}}
\def\cE{\mathcal{E}}
\def\iK{\it{K}}
\def\fg{\mathfrak{g}}
\def\k{\mathfrak{k}}
\def\del{\partial}
\def\delb{\overline\partial}
\begin{document}

\title[Deformation and Coupled K\"ahler--Yang--Mills equations]
{Deformation of complex structures and the Coupled K\"ahler--Yang--Mills equations}
\author[M. Garcia-Fernandez]{Mario Garcia-Fernandez}
\author[C. Tipler]{Carl Tipler}
\address{EPFL SB MATHGEOM, MA B3 495 (Batiment MA), Station 8, 8000, CH-1015 Lausanne, Switzerland}
\address{D\'epartement de math\'ematiques,
Universit\'e du Qu\'ebec \`a Montr\'eal,
Case postale 8888, succursale centre-ville,
Montr\'eal (Qu\'ebec), H3C 3P8}
\email{mariogf@qgm.au.dk ; carl.tipler@cirget.ca}

\thanks{The first author was supported by QGM (Centre for Quantum Geometry of Moduli Spaces) funded by the Danish National Research Foundation and also by a grant of the Hausdorff Research Institute for Mathematics. The second author is supported by a CRM-ISM grant.}

\begin{abstract}
In this work we define a deformation theory for the Coupled K\"ahler--Yang--Mills equations in arXiv:1102.0991, generalizing work of Sz\'ekelyhidi on constant scalar curvature K\"ahler metrics. We use the theory to find new solutions of the equations via deformation of the complex structure of a polarised manifold endowed with a holomorphic vector bundle. We also study the deformations of the recent examples of Keller and T{\o}nnesen-Friedman.
\end{abstract}

\maketitle


\section{Introduction}
\label{secintro}

The Coupled K\"ahler--Yang--Mills equations (in short CKYM) were introduced in \cite{GF} as a new approach to the moduli problem for triples $(X,L,E)$, where $E$ is a holomorphic principal $G^c$-bundle over a polarised complex manifold $(X,L)$. Solutions of the equations
\begin{equation}\label{eq:CKYM0}
\begin{split}
\Lambda F & = z,\\
S_g - \alpha \Lambda^2 \tr F \wedge F & = c,
\end{split}
\end{equation}
given by a K\"ahler metric $g$ on $X$ with K\"ahler class $c_1(L)$ and a reduction $H$ of $E$ to a maximal compact subgroup $G \subset G^c$, can be seen as natural uniformisers of the complex geometry of $(X,L,E)$. Here, $S_g$ is the scalar curvature of the metric, $F$ stands for the curvature of the Chern connection of $H$ and $\Lambda$ denotes contraction with the K\"ahler form. The equations depend on a \emph{coupling constant} $\alpha \in \RR$, that determine the topological constant $c$ (for further details see Section \ref{sec:background}).

Being a generalization of the conditions of constant scalar curvature for a K\"ahler metric and Hermitian--Yang--Mills for a connection, the equations \eqref{eq:CKYM0} describe the zeros of a moment map for the hamiltonian action of a group $\ctG$ on a infinite dimensional space $\cP$. For fixed $\omega$ and $H$, $\cP$ parameterises structures of holomorphic principal bundle and K\"ahler manifold on the smooth manifolds underlying $E$ and $X$. The \emph{extended gauge group} $\ctG$ is given by a non trivial extension
$$
1\to \cG \lra{} \ctG \lra{} \cH \to 1
$$
of the group of Hamiltonian symplectomorphism for the symplectic structure $\omega$ by the gauge group of the reduction $H$. The coupling constant $\alpha$ is used in the definition of the symplectic structure on $\cP$, which is K\"ahler for $\alpha >0$. Relying on this moment map interpretation, a general theory for the study of the CKYM equations was developped by the first author jointly with Alvarez-Consul and Garcia-Prada in \cite{AGG}. Particular features of this problem emerge from the structure of $\ctG$, which does not admit a complexification nor an invariant metric on its Lie algebra (unlike $\cH$).

The difficulty, from the analytic viewpoint, in determining whether or not a solution of \eqref{eq:CKYM0} exists is that the resulting system of partial differential equations is coupled, fourth order and fully non-linear.
Despite this, some examples have been found in \cite{AGG,GF} for small $\alpha$, via perturbation of constant scalar curvature K\"ahler (cscK) metrics and Hermitian-Yang--Mills connections. More recently, examples of solutions over a polarised manifold which does not admit any cscK metric has been obtained by Keller and T{\o}nnesen--Friedman \cite{KT}. In this work we add to the list of examples, finding new solutions of \eqref{eq:CKYM0} via simultaneous deformation of the complex structures on $X$, $L$ and $E$ in \cite{KT}. The main tool to achieve this goal is a deformation theory for the CKYM equations, which
generalizes the one developed by Sz\'ekelyhidi \cite{sz} for cscK metrics. Roughly, the idea is that small complex deformations of $(X,L,E)$ which are stable in the GIT sense are the ones carrying solutions of the CKYM equations.

To state a precise result, we fix $(\omega,H)$, a solution of \eqref{eq:CKYM0} on $(X,L,E)$, and consider the group $K$ of those holomorphic automorphisms of $E$ lying in the extended gauge group $K \subset \ctG$. This group is compact and finite dimensional, and hence admits a complexification $K^c$. Using elliptic operator theory, in Section \ref{subsec:complex} we construct a finite dimensional representation
$$
K^c \curvearrowright H^1(X,L_\omega^*),
$$
parameterising infinitesimal deformations of the complex structure on $E$ and $X$. By application of the Kuranishi method in Section \ref{subsec:slice}, any small deformation $(X',L',E')$ of $(X,L,E)$ determines a point in $H^1(X,L_{\om}^*)$ (see Section \ref{sec:deformCKYM} for details).

\begin{theorem}
\label{theo:finitestab}
Let $(X,L,E)$ be a polarized K\"ahler manifold $(X,L)$ endowed with a holomorphic principal bundle $E\rightarrow X$, which admits a solution of~\eqref{eq:CKYM0} with $\alpha > 0$. Then, any small deformation $(X',L',E')$ with closed $K^c$-orbit in $H^1(X,L_{\om}^*)$ admits also a solution. In particular, if $K$ is finite the existence of solutions of~\eqref{eq:CKYM0} is an open condition under deformation of complex structure.
\end{theorem}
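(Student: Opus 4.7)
The plan is to adapt Sz\'ekelyhidi's moment map argument for cscK metrics to the coupled setting. Since $(\omega, H)$ is a zero of the moment map $\mu$ for the $\ctG$-action on $\cP$, its stabilizer there is the compact group $K$, which complexifies to $K^c$. By the Kuranishi construction of Section~\ref{subsec:complex}, $K^c$ acts linearly on the slice $H^1(X, L_{\om}^*)$, and by Section~\ref{subsec:slice} every small complex deformation $(X', L', E')$ of $(X, L, E)$ is represented by some point in a $K$-invariant open neighborhood $U$ of the origin.

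The main step is a finite-dimensional reduction of the moment map equation. For each $v \in U$, I would look for $\xi \in i\Lie\ctG$ in the $L^2$-orthogonal complement of $\Lie K$, with respect to an inner product on $\Lie\ctG$ built from $\omega$ and $H$ via the extension $1 \to \cG \to \ctG \to \cH \to 1$, such that the gauge-modified structure satisfies $\mu = 0$ modulo $\mathfrak{k}^*$. The linearization at the origin is a self-adjoint elliptic operator whose kernel and cokernel are both identified with $\Lie K$, the kernel identification using that $(\omega, H)$ solves~\eqref{eq:CKYM0}. The implicit function theorem then yields a smooth $K$-equivariant \emph{obstruction map} $\Phi \colon U \to \mathfrak{k}^*$. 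A direct check, using that $\cP$ is K\"ahler when $\alpha > 0$ and that the construction is $K$-invariant, shows that $\Phi$ is a moment map for the finite-dimensional $K$-action on $U$ with the natural K\"ahler form induced from the slice.

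Once $\Phi$ is in place, the theorem follows from the classical finite-dimensional Kempf--Ness theorem: a $K^c$-orbit in $U$ meets $\Phi^{-1}(0)$ precisely when it is closed. Pulling back via the Kuranishi parametrization, any small deformation with closed $K^c$-orbit in $H^1(X, L_{\om}^*)$ produces a zero of the full moment map $\mu$, i.e.\ a solution of~\eqref{eq:CKYM0}. When $K$ is finite, every $K^c$-orbit in $U$ is a discrete set and hence closed, so the openness statement is immediate.

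The main obstacle will be the reduction carried out in the middle paragraph, executed in the absence of a complexification of $\ctG$ or a bi-invariant metric on $\Lie\ctG$. Concretely, one must glue together $L^2$-metrics on $\Lie\cG$ and $\Lie\cH$ to obtain a $K$-invariant inner product on $\Lie\ctG$ that makes the linearization elliptic and self-adjoint, and then verify that the abstract projection $\Phi$ genuinely computes the finite-dimensional $K$-moment map on the slice rather than a non-equivariant truncation of $\mu$. The non-existence of a true $\ctG$-complexification also forces the whole polystability-versus-existence correspondence to be encoded through the genuine, finite-dimensional $K^c$-action on $H^1(X,L_{\om}^*)$, with the role of formal orbits of "$\ctG^c$" played only by the one-parameter subgroups $\exp(\xi)$ used in the implicit function theorem step.
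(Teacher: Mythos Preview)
Your outline is essentially the paper's own argument: modify the Kuranishi slice along imaginary $\ctG$-directions via the implicit function theorem so that the infinite-dimensional moment map, restricted to the slice, takes values in $\mathfrak{k}^*$, and then invoke a finite-dimensional moment-map/GIT result. Two points deserve sharpening.

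First, the ``classical Kempf--Ness theorem'' is not quite what is being applied. The K\"ahler form on $U$ is the pullback $\Phi^*\omega_\alpha$, which is \emph{not} the flat linear form on $H^1(X,L_\om^*)$; it is only defined on a neighborhood of the origin and is non-constant. The paper therefore invokes Sz\'ekelyhidi's Proposition~\ref{propo:Gabor}, which is precisely the extension of Kempf--Ness to this situation (a linear $K$-action with a K\"ahler form defined only near $0$ and with $\mu(0)=0$). Your last paragraph correctly anticipates the related difficulty that $\Lie\ctG$ carries no invariant metric; the paper resolves this not with a single glued $L^2$-metric but with a \emph{family} of pairings $(\cdot,\cdot)_x$ depending on the Chern connection $A_x$ at $\Phi(x)$, which identifies the $\mathfrak{k}$-valued map $\mu_\alpha^*\circ\Phi$ with the genuine $\mathfrak{k}^*$-valued moment map $\Phi^*\mu_\alpha$.

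Second, the linearization you need is $P^*P$ restricted to $\mathfrak{k}^\perp$ (this is \cite[Lemma~4.5]{AGG} in the paper), and the ``complex exponential'' $\exp_{J,A}$ of Section~\ref{subsec:orbits} is what makes sense of your $\exp(\xi)$ for $\xi\in\imag\Lie\ctG$ in the absence of a group complexification. With those two refinements your proposal matches the paper's proof.
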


Our method of proof is based on that of Sz\'ekelyhidi in \cite{sz}. We consider an ambient space $\cP \subset \cJ \times \cA$ of almost complex structures and define an elliptic complex (Section \ref{sec:defopairs}), with first cohomology $H^1(X,L_\om^*)$, that encodes the structure of $\cJ\times \cA$ near $(X,L,E)$, taking into account the action of $\ctG$ and the subspace $\cP$. With this complex at hand, in Proposition \ref{prop:slice} we construct a slice in $\cJ\times \cA$ that parametrizes nearby structures up to complexified orbits (see Section \ref{subsec:orbits}). Using the slice, we pull-back the moment map problem to $H^1(X,L_\om^*)$ and apply Proposition \ref{propo:Gabor}, due to Sz\'ekelyhidi, which relates a moment map problem on a vector space with its linearisation. For this, we crucially need $\alpha > 0$ to assure that the pull-back symplectic structure is K\"ahler.

We apply Theorem \ref{theo:finitestab} in two different situations. First, to find new examples of solutions, we consider deformations of two different homogeneous split rank two bundle over the product $\PP_1 \times \PP_1$, with fixed base. One of the bundles corresponds to the reducible point in the moduli of ASD connections in \cite[p. 242]{DK}. Second, to illustrate the theory in the case where the polarized manifold is also deformed, we build on the non-trivial solutions of the coupled equations constructed by Keller and T{\o}nnesen-Friedman \cite{KT}. These solutions exhibit $\alpha >0$ and are on a line bundle over a polarised ruled threefold, that we deform using Theorem \ref{theo:finitestab} combined with Lemma \ref{lem:defX}. We should remark that the complex manifolds obtained by deformation have the same form as in \cite[Example 3.5]{KT} and, although we cannot prove it, it is likely that the solutions obtained applying Theorem \ref{theo:finitestab} are indeed covered in \cite{KT}. This would follow from the uniqueness of solutions of the CKYM equations, which is unknown up to date. The idea of applying Sz\'ekelyhidi's argument to find solutions of the equations at hand amounts to work of Donaldson in \cite{D3} (see also \cite{RoTi,Bro}).

Our application follows from a rather technical discussion in Section \ref{sec:ap}: we extend the elliptic complex in Section \ref{subsec:complexext} to a complex $L^*_\om$ and compare that to a standard complex $L^*$ encoding deformations of pairs $(X,E)$ (see Proposition \ref{prop:compare}), considered previously by Huang \cite{Hu} (see also \cite{Griff}). We expect this to be of independent interest, as in particular it settles all the tools necessary for the construction of the moduli space of solutions of \eqref{eq:CKYM0}. Following recent work of the first author with Alvarez-Consul and Garcia-Prada \cite{AGG2}, further motivation for the study of this moduli space comes from the Physics of cosmic strings. We hope to come back to this in future work.

\medskip

\textbf{Outline:} We start in Section \ref{sec:background} with the necessary background on the CKYM equations. In Section \ref{sec:defopairs} we construct our elliptic complex and the slice, using Kuranishi method. The proof of Theorem \ref{theo:finitestab} is contained in Section \ref{sec:deformCKYM}. Section \ref{sec:ap} considers the complexes $L_\omega^*$ and $L^*$ and ends in the proof of Proposition \ref{prop:compare}, that we apply to construct the examples in Section \ref{sec:example}.

\medskip

\textbf{Acknowledgments:} We wish to thank Luis Alvarez-Consul, Bjorn Andreas, Vestislav Apostolov, Paul Gauduchon, Julien Keller, Joan Pons-Llopis, Gabor Sz\'ekelyhidi and Christina T{\o}nnesen-Friedman for useful discussions. The first author wants to thank Luis Alvarez--Consul and Oscar Garcia-Prada for proposing the problem of the Coupled K\"ahler--Yang--Mills equations for his Thesis. Part of this work was undertaken
during the first author's visit to the Hausdorff Research Institute for Mathematics, and he wishes to thank the hospitality. The second author would like to thank both LMJL and CIRGET for providing stimulating and welcoming environments.



\section{Background on the CKYM equations}\label{sec:background}

Throughout this section, $M$ is a fixed compact symplectic manifold of dimension $2n$, with symplectic form $\omega$
and volume form $\vol_\omega =\frac{\omega^n}{n!}$.
We fix a real compact Lie group $G$ with Lie algebra $\fg$ and a smooth principal
$G$-bundle $H$ over $M$. We also fix
a positive definite inner product
$$
-\tr: \mathfrak{g} \times \mathfrak{g} \to \RR
$$
on $\mathfrak{g}$ invariant under the adjoint action, that induces a metric on the adjoint bundle $\ad H$. The
space of smooth $k$-forms on $M$ is denoted by $\Omega^{k}$ and for any associated vector bundle $F$, $\Omega^{k}(F)$ denotes the space of smooth
$F$-valued $k$-forms on $M$.
Considering the space $\Omega^k(\ad H)$, the metric on $\ad H$ extends to give a pairing
\begin{equation}\label{eq:Pairing}
\Omega^p(\ad H) \times \Omega^q(\ad H) \to \Omega^{p+q},
\end{equation}
that we write simply by $- \tr a_p \wedge a_q$ for $a_j \in \Omega^j(\ad H)$, $j = p,q$. Given an almost complex structure on $M$ compatible with $\omega$ it determines a metric and we can regard the operator
\begin{equation}
\label{eq:Lambda}
  \Lambda:\Omega^{p,q}\to \Omega^{p-1,q-1}
\end{equation}
acting  on the space of smooth $(p,q)$-forms as the adjoint of the Lefschetz operator $\Omega^{p-1,q-1}\to
\Omega^{p,q}\colon \gamma \mapsto \gamma
\wedge \omega$. Its linear extension
$\Lambda:\Omega^{p,q}(\ad H)\to \Omega^{p-1,q-1}(\ad H)$ will be denoted in the same way.

\subsection{The CKYM equations}\label{subsec:equations}

Let $\cJ$ and $\cA$ be the spaces of almost complex structures on $M$ compatible with $\omega$ and connections on $H$, whose elements we denote respectively by $J$ and $A$. In this section we explain how the CKYM equations arise naturally from the symplectic reduction of a subspace $\cP \subset \cJ \times \cA$ of integrable pairs.

The group of symmetries of this theory is the \emph{extended gauge group} $\ctG$. It is
the group  of automorphisms of $H$ which cover elements of the group of hamiltonian symplectomorphisms
$\cH$. There is a canonical short exact sequence of Lie groups
\begin{equation}
\label{eq:coupling-term-moment-map-1}
  1\to \cG \lra{} \ctG \lra{\pr} \cH \to 1,
\end{equation}
where the gauge group $\cG$ of $H$ is the normal subgroup $\cG\subset\ctG$ of automorphisms covering the identity. Here $\pr$ is the map that assigns to each automorphism the Hamiltonian symplectomorphisms that it covers.

There are actions of $\ctG$ on the spaces $\cJ$ and $\cA$, which combine to give an action on the product
\[
  g(J,A)=(\pr(g)J, gA), \text{ for $g\in\ctG$},
\]
where $\pr(g)J$ denotes the push-forward of $J$ by $\pr(g) \in \cH$. To describe the $\ctG$-action on $\cA$, we view the elements of $\cA$ as $G$-equivariant splittings $A\colon TH\to VH$ of the short exact sequence
\begin{equation}
\label{eq:principal-bundle-ses}
  0 \to VH \lto TH\lto \pi^*TM \to 0,
\end{equation}
where $VH=\ker d\pi$ is the vertical bundle. Using the action of $g\in \ctG$ on $TH$, the $\ctG$-action on $\cA$ is
$$
g A \defeq g\circ A \circ g^{-1}.
$$
Given a connection $A$, we use the notation $A^\perp$ for the horizontal lift of a vector field  $y$ on $M$ to a vector field $A^\perp y$ on $H$. Note that $A^\perp \eta_\phi \in \LieX$ for any Hamiltonian vector field $\eta_\phi \in \LieH$ given by a smooth function $\phi \in C^\infty(M)$. Hence, any connection determines a vector space splitting of the short exact sequence of Lie algebras induced by~\eqref{eq:coupling-term-moment-map-1}.


The spaces $\cJ$ and $\cA$ are endowed with $\ctG$-invariant symplectic structures $\omega_\cJ$ and $\omega_\cA$ induced by $\omega$, that we can combine to define a symplectic form on the product for each non-zero real constant $\alpha$, given by
\begin{equation}
\label{eq:Sympfamily}
\omega_\alpha = \omega_\cJ + \frac{4 \alpha}{(n-1)!} \omega_\cA.
\end{equation}
The following result provides the starting point for the theory of the CKYM equations.

\begin{proposition}\cite[Prop. 2.3.1]{GF}
\label{prop:momentmap-pairs} The $\ctG$-action on $(\cJ\times \cA,\omega_\alpha)$ is hamiltonian with $\ctG$-equivariant moment map $\mu_{\alpha}\colon \cJ \times \cA\to \LieX^*$ given by
\begin{equation} \label{eq:thm-muX}
\begin{split}
\langle \mu_{\alpha}(J,A),\zeta\rangle & = - 4\alpha\int_X \tr A\zeta\wedge(\Lambda F_A-z)\vol_{\omega}\\
&- \int_X  \phi\(S_J - \alpha \Lambda^2 \tr F_A \wedge F_A + 4\alpha \Lambda \tr F_A\wedge z - c_z\)\vol_{\omega}
\end{split}
\end{equation}
for any $\zeta\in\LieX$ covering $\eta_\phi \in \LieH$.
\end{proposition}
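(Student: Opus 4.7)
The plan is to establish this as a coupling of two well-known moment map computations, namely the Fujiki--Donaldson moment map for the $\cH$-action on $(\cJ,\omega_\cJ)$, which gives the Hermitian scalar curvature, and the Atiyah--Bott moment map for the $\cG$-action on $(\cA,\omega_\cA)$, which gives the contraction $\Lambda F_A$. The nontrivial content beyond these two classical pieces is the appearance of the cross terms $-4\alpha\int \tr A\zeta\wedge(\Lambda F_A - z)\vol_\omega$ and $4\alpha\int \phi\, \Lambda\tr F_A\wedge z\,\vol_\omega$ in \eqref{eq:thm-muX}, together with the topological constant $c_z$; these must be shown to arise precisely from the extension structure~\eqref{eq:coupling-term-moment-map-1} together with the specific weighting of the two symplectic forms in \eqref{eq:Sympfamily}.

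First I would fix $\zeta\in\LieX$ covering $\eta_\phi\in\LieH$ and spell out the infinitesimal action on a pair $(J,A)\in\cJ\times\cA$. On the $\cJ$-factor this is just $\cL_{\eta_\phi}J$, while on the $\cA$-factor, regarding $A$ as the equivariant splitting $TH\to VH$ and $\zeta$ as a $G$-invariant vector field on $H$, it is $\cL_\zeta A$. Using the decomposition $\zeta = A^\perp\eta_\phi + (\zeta - A^\perp\eta_\phi)$, the vertical part $A\zeta\in\Omega^0(\ad H)$ yields a piece $-d_A(A\zeta)$, while Cartan's formula applied to the horizontal lift contributes $\iota_{\eta_\phi}F_A$. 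Thus one gets the standard expression
\begin{equation*}
\zeta\cdot A = -d_A(A\zeta) + \iota_{\eta_\phi}F_A.
\end{equation*}
The next step is to pair this with an arbitrary tangent vector $a\in\Omega^1(\ad H)$ under $\omega_\cA$, integrate by parts, and use $\iota_{\eta_\phi}\omega = -d\phi$ together with the K\"ahler identity relating wedging with $\omega^{n-1}/(n-1)!$ to $\Lambda$, to recover the differential of the claimed $\cA$-contribution to $\mu_\alpha$. The Fujiki--Donaldson computation then supplies the $S_J$-term from pairing $\cL_{\eta_\phi}J$ with $\dot J$ under $\omega_\cJ$.

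The decisive step, and where I expect the bookkeeping to be most delicate, is collecting the cross contributions. The $\iota_{\eta_\phi}F_A$ piece of the action on $A$, when paired against the variation of $A$ that one obtains by differentiating the constraint $\Lambda F_A = z$ along a path moving $J$ with $\phi$ fixed, produces the coupling $\int\phi\,\Lambda^2\tr F_A\wedge F_A\,\vol_\omega$ after an integration by parts using the Bianchi identity $d_A F_A=0$; matching signs and the prefactor $4\alpha/(n-1)!$ then yields the $-\alpha\Lambda^2\tr F_A\wedge F_A$ term. The further $4\alpha\Lambda\tr F_A\wedge z$ piece comes from the fact that the vertical central element $z\in\fg$ enters both in $\Lambda F_A - z$ and implicitly through the constant $c_z$; writing $\iota_{\eta_\phi}F_A$ and pairing against $z$ via $-\tr$ together with $\iota_{\eta_\phi}\omega=-d\phi$ and integration by parts produces exactly this term. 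The constant $c_z$ is then forced by $\ctG$-equivariance applied to the constant Hamiltonians $\phi\equiv\mathrm{const}$, or equivalently by demanding that $\int_X\mu_\alpha(J,A)\cdot 1\cdot\vol_\omega$ be a topological invariant, computable in terms of $\int_X S_J\vol_\omega$, $\int_X\tr F_A\wedge F_A\wedge\omega^{n-2}$ and $\int_X \tr F_A\wedge z\cdot\omega^{n-1}$.

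Finally I would verify the $\ctG$-equivariance of $\mu_\alpha$. Since $\cH$ acts on itself by the adjoint action and $\cG$ acts trivially on the base, equivariance reduces to two checks: that pullback by $\pr(g)\in\cH$ sends $(S_J,F_A)$ to $(S_{\pr(g)J},\pr(g)^*F_A)$ with corresponding transformation of $\phi$ and $A\zeta$, which is immediate from naturality; and that inner automorphisms by elements of $\cG$ preserve each of the four terms in \eqref{eq:thm-muX}, which follows from the $\mathrm{Ad}$-invariance of $-\tr$ and from $\pr(g)=\mathrm{id}$. The closedness of $\omega_\alpha$ and the existence of the lift to a moment map (rather than just a moment one-form) are automatic on the simply-connected affine spaces $\cJ$ and $\cA$, so uniqueness up to an additive constant on each $\ctG$-orbit reduces to the choice of $c_z$ already fixed above, completing the argument.
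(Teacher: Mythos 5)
The paper offers no proof of this proposition: it is imported verbatim from \cite[Prop.~2.3.1]{GF} (see also \cite{AGG}), so there is no in-text argument to compare against. Your outline does follow the standard route of that reference --- Fujiki--Donaldson for the $\cH$-action on $(\cJ,\omega_\cJ)$, Atiyah--Bott for the $\cG$-action on $(\cA,\omega_\cA)$, the decomposition $\zeta\cdot A=-d_A(A\zeta)+\iota_{\eta_\phi}F_A$ coming from the splitting $\zeta=A^\perp\eta_\phi+(\zeta-A^\perp\eta_\phi)$, and the identification of the cross terms as the price of the horizontal lift acting nontrivially on $\cA$. That is the correct architecture, and the equivariance discussion is essentially fine (though note $\ctG$ is a non-split extension, so rather than checking $\cG$ and a copy of $\cH$ separately it is cleaner to deduce equivariance from connectedness of $\ctG$ plus the infinitesimal Poisson-bracket identity, or from the manifest naturality of the formula).

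The one step that is genuinely misdescribed, and would fail as written, is your derivation of the $-\alpha\Lambda^2\tr F_A\wedge F_A$ term. You say it comes from pairing $\iota_{\eta_\phi}F_A$ against ``the variation of $A$ obtained by differentiating the constraint $\Lambda F_A=z$ along a path moving $J$.'' But no such constraint is in force: $\mu_\alpha$ is a moment map on all of $\cJ\times\cA$, and $\Lambda F_A=z$ is part of the \emph{zero locus}, i.e.\ the conclusion, not an identity one may differentiate during the computation. The identity $d\langle\mu_\alpha(\cdot),\zeta\rangle=\omega_\alpha(P(\zeta),\cdot)$ must be verified against an \emph{arbitrary} tangent vector $(\dot J,a)$ with $a\in\Omega^1(\ad H)$ unconstrained. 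The correct mechanism is that $\int_X\phi\,\Lambda^2\tr F_A\wedge F_A\,\vol_\omega$ (up to the normalization fixed by \eqref{eq:Sympfamily}) is a Hamiltonian for the vector field $A\mapsto\iota_{\eta_\phi}F_A$ on $(\cA,\omega_\cA)$: one differentiates it in the arbitrary direction $a$, integrates by parts using $d_AF_A=0$, and converts $d\phi$ into $-\iota_{\eta_\phi}\omega$. Likewise the $4\alpha\Lambda\tr F_A\wedge z$ term is forced because $A\zeta$ varies with $A$ for fixed $\zeta\in\LieX$, so the $z$-shift of the Atiyah--Bott map is not constant along $\cA$-directions; it is not produced by equivariance or by the choice of $c_z$, which only normalizes the component on constant Hamiltonians. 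With that step repaired, the sketch matches the proof in \cite{GF}.
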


Here $z \in \mathfrak{g}$ is an element in the centre of $\mathfrak{g}$, $c_z$ is a real constant defined in \cite[Eq. (2.3.2)]{GF}, $S_J$ the Hermitian scalar curvature of $J$ and $F_A \in \Omega^2(\ad H)$ is the curvature of the connection $A$, given by
$$
F_A = - A[A^\perp\cdot,A^\perp\cdot].
$$
A key observation in \cite{GF} is that the space $\cJ \times \cA$ has a (formally integrable) complex structure preserved by the $\ctG$-action, given by
\begin{equation}
\label{eq:complexstructureI}
\mathbf{I}_{\mid(J,A)}(\gamma,a) = (J\gamma,-a(J \cdot)); \quad (\gamma,a) \in T_J\cJ\times T_A\cA.
\end{equation}
For $\alpha$ positive, $\mathbf{I}$ is compatible with the family of symplectic structures \eqref{eq:Sympfamily}, thus defining a K\"ahler structure on $\cJ \times \cA$. The condition $\alpha > 0$ will be assumed in the sequel.

Suppose now that there exist K\"{a}hler structures on $M$ with K\"{a}hler form $\omega$. This means that
the subspace $\cJi \subset \cJ$ of integrable almost complex structures compatible with $\omega$ is non-empty. For each $J\in \cJi$, let $\cA^{1,1}_J\subset\cA$ be the subspace of connections $A$ such that $F_A \in \Omega_J^{1,1}(\ad H)$. The \emph{space of integrable pairs}
\begin{equation}
\label{eq:cP}
  \cP\subset \cJ\times \cA,
\end{equation}
consisting of elements $(J,A)$ with $J\in \cJi$ and $A\in \cA^{1,1}_J$, is a (maybe singular) K\"ahler submanifold which inherits a Hamiltonian action of $\ctG$. The zero locus of the induced moment map $\mu_\alpha$ corresponds precisely to the solutions of the \emph{Coupled K\"ahler--Yang--Mills equations}
\begin{equation}\label{eq:CKYM}
\begin{split}
\Lambda F_A & = z,\\
S_J \; - \; \alpha \Lambda^2 \tr F_A \wedge F_A & = c.
\end{split}
\end{equation}
Here, $S_J$ i\def\d{\partial}s the scalar curvature determined by the metric $g_J = \omega(\cdot,J\cdot)$ and $c$ is a real constant.

\subsection{Complexified orbits}\label{subsec:orbits}

We recall now the notion of complexified $\ctG$-orbit (or simply $\ctG^c$-orbit) in $\cJ \times \cA$ \cite{AGG,GF}, that will be used in the application of Kuranishi's method in \S\ref{sec:defopairs}. This notion was introduced by Donaldson \cite{D1} for the action of $\cH$ on $\cJ$, who observed that while the complexification of $\cH$ does not exist one can make sense of its orbits as leaves of a distribution.

In the case we are concerned, the distribution is given by the image of the complexified infinitesimal action
\begin{equation}\label{eq:infactcpx}
P\colon \Lie\ctG^c \to T(\cJ \times \cA): \zeta_0 + \imag \zeta_1 \to P(\zeta_0) + \mathbf{I}P(\zeta_1),
\end{equation}
where $P(\zeta_j)$ denotes the infinitesimal action of $\zeta_j \in \Lie \ctG$.

\begin{definition}
We say that $(J_0,A_0)$ and $(J_1,A_1)$ are in the same $\ctG^c$-orbit if there exists paths $\zeta_t$ in $\Lie\ctG^c$ and $(J_t,A_t)$ in $\cJ \times \cA$ for $t \in [0,1]$ such that
\[
\frac{d}{dt}(J_t,A_t) = P(\zeta_t).
\]
\end{definition}

To perturb pairs in a $\ctG^c$-orbit, we need an analogue of the complex exponential map for $\Lie\ctG^c$ which extends the $\ctG$-action on $\cJ \times \cA$ \cite{AGG,GF}. To recall this notion, we need first some preliminary background. Let $G^c$ be the complexification of $G$ with Lie algebra $\mathfrak{g}^c$ and consider the associated principal $G^c$-bundle $P=H\times_G G^c$.  The use of the same notation for the infinitesimal action \eqref{eq:infactcpx} should lead to no confusion. Let $\Aut P$ the group of automorphisms of $P$, that is, the
$G^c$-equivariant diffeomorphisms $g\colon P\to P$. We can now regard the elements in $\cJ \times \cA$ as $G^c$-invariant almost complex structures on the principal $G^c$-bundle $P$ 
using the map
\begin{equation}
\label{eq:pairsinvacs}
(J,A) \mapsto I_{J,A} \defeq \imag A + A^{\perp}J \pi,
\end{equation}
where $\pi\colon P \to M$ is the projection and $\imag$ is the almost complex structure on the vertical bundle $VP \subset TP$ induced by the complex structure on $G^c$. This map is $\ctG$-equivariant and the condition $(J,A) \in \cP$ is equivalent to the integrability of $I_{J,A}$. 

Given $(J,A) \in \cJ \times \cA$, the \emph{complex exponential} \cite[Eq. (3.1.13)]{GF} is a map
$$
\exp_{J,A} \colon \LieX^c \to \Aut P
$$
that extends the exponential map on $\ctG$. To define it, let $\cK_J$ be the space of symplectic structures on $M$ compatible with $J$ and let $\cB$ be the space of pairs $(\omega',H')$ given by an element $\omega' \in \cK_J$ and a reduction $H'$ of $P$ to $G$. Given
$$
\imag\zeta = \imag(\xi + \theta_A^\perp \eta_\phi) \in \imag\LieX
$$
with $\xi \in \LieG$ and $\eta_\phi \in \cH$, we consider the curve
$$
(\omega + t dJd\phi,e^{it\xi}H) \in \cB,
$$
where we use the natural action of $\Aut P$ on the space of $G$-reductions of $P$. The complex exponential is now defined by
\begin{equation}\label{eq:exp}
\exp_{J,A}(\zeta' + \imag \zeta) := f_1\exp(\zeta'),
\end{equation}
for $\zeta' \in \LieX$ and $\zeta$ as before, where $f_t$ is the flow of the the time dependent vector field
$$
y_t = -I_{J,A}(\xi + A_t^\perp \eta_t)
$$
on $P$ with $f_0 = \Id$. Here, $\eta_t$ is the Hamiltonian vector field of $\phi$ with respect to the symplectic form $\omega + t dJd\phi$ and $A_t$ is the Chern connection of the reduction $e^{it\xi}H$ with respect to the complex structure $I_{J,A}$.

The complex exponential has some properties which are important for our construction in \S\ref{sec:defopairs}, that we recall now. Let $K$ the isotropy group of $(J,A)$ on $\ctG$ and note that $K$ acts on $\Aut P$ on the left. Then, $\exp_{J,A}$ is $K$-equivariant with respect to the adjoint action on $\LieX^c$. By construction, we also have a well defined \emph{complexified action}
\begin{equation}\label{eq:cpxact}
\LieX^c \to \cJ \times \cA \colon \zeta \to \exp_{J,A}(\zeta)^{-1} (J,A),
\end{equation}
where the image of $\zeta$ is the point in $\cJ \times \cA$ corresponding to the pull-back almost complex structure $(\exp_{J,A}(\zeta))^*I_{J,A}$. Furthermore, when $(J,A) \in \cP$ the image of the complexified action is contained in the $\ctG^c$-orbit of $(J,A)$ and
$$
\frac{d}{dt}_{|t=0}\exp_{J,A}(t\zeta)^{-1}(J,A) = P(\zeta),
$$
for any $\zeta \in \LieX^c$.

A geometric understanding of the previous construction has been provided in \cite{AGG}. The set $\cB$ can be endowed with a natural structure of symmetric space (see \cite[Th. 3.6]{AGG}) and a principal $\ctG$-bundle $\cY \subset \Aut P$ with connection, which plays the role of the missing complexification of $\ctG$. Then, the complex exponential takes values in $\cY$ and the curve $f_t$ above is just the horizontal lift of the curve in $\cB$.

\section{Deformation of complex structures on pairs}\label{sec:defopairs}

Let $(J,A)\in \cP$ be an integrable pair. This element defines a complex manifold $X$ with K\"ahler form $\omega$ and underlying smooth manifold $M$, together with a holomorphic principal $G^c$-bundle $E\to X$ with underlying smooth bundle $P$ endowed with a reduction $H$.
In this section we define an elliptic complex that encodes the structure of $\cJ\times \cA$ near $(J,A)$,
taking into account the action of $\ctG$ and the subspace $\cP$. With this complex at hand, we construct a slice in $\cJ\times \cA$ that parametrizes nearby structures up to $\ctG^c$-orbits.

Given a $(1,q)$-form $\sigma$ on $X$ (that may take values in a vector bundle), we introduce the following notation. Given $i\geq 1$ and a $(0,i)$-form with values in the holomorphic tangent bundle $\a\in\Om^{0,i}(X,T^{1,0})$, we define a $(0,q+i)$-form $\sigma^\a$ by
\begin{equation}\label{eq:sigmagamma}
\sigma^\gamma= (\sigma(\gamma,\cdot))^{skw}
\end{equation}
where $skw$ denotes the skew-symmetric part of the tensor $\sigma(\gamma,\cdot)$ satisfying :
$$
\forall (X_j)\in (T^{0,1})^{q+i}\; , \; \sigma(\gamma,\cdot)(X_1,\ldots,X_{q+i})=\sigma(\gamma(X_1,\ldots,X_i),X_{i+1},\ldots,X_{q+i}).
$$

\subsection{The elliptic complex}\label{subsec:complex}

Consider the tangent space of $\cJ\times\cA$ at $(J,A)$, naturally identified with
\begin{equation}\label{eq:tangentJA}
T_J\cJ\times T_A\cA \cong \lbrace (\a,\b)\in \Om^{0,1}(T^{1,0})\times \Om^{0,1}(\ad P) \; : \; \om^\a = 0\rbrace 
\end{equation}
where we use the isomorphism $\Omega^{0,1}(\ad P) \to \Omega^1(\ad H)\colon \b \to \b - \b^*$. Given $\gamma \in T_J\cJ$, we write $J_\gamma$ for the almost complex structure induced by $\gamma$ (see e.g. \cite{ku}).

We calculate now the infinitesimal condition for a deformation $(J_\a,A+\beta - \beta^*)$ to be integrable. Denote by
$$
\begin{array}{cccc}
\pr_{J_\a}^{0,2} : & \Om^2(\ad P) &\rightarrow &\Om^{0,2}(\ad P) \\
 & b & \mapsto & \dfrac{1}{4} (b(\cdot,\cdot) + i b(J_\a\cdot,\cdot)+ib(\cdot, J_\a\cdot) - b (J_\a\cdot,J_\a\cdot))
\end{array}
$$
the projection induced by the almost-complex structure $J_\a$. Then, the integrability reads
\begin{equation} \label{eq:integrability}
\begin{split}
\delb \a - [ \a,\a ] & = 0, \\
\pr_{J_\a}^{0,2}F_{A+\beta - \beta^*} & = 0.
\end{split}
\end{equation}
Using the generalized De Rham sequence determined by the covariant derivative $\nabla_A$ on the adjoint bundle (see e.g. \cite{AB})
\begin{equation}
\label{eq:Derham}
\Om^0(\ad H)\lra{d_A} \Om^1(\ad H) \lra{d_A} \Om^2(\ad H) \lra{d_A} ...
\end{equation}
we can write the variation of the curvature of $A + a$ for $a \in \Omega^1(\ad H)$ as
$$
F_{A+a}=F_A+d_A a + [a,a],
$$
and hence the linearisation of \eqref{eq:integrability}, describing the tangent space of $\cP$ at $(J,A)$, is
\begin{equation}
\begin{split}
\delb \a  & = 0,\\
\dbar_{J,A}(\beta) + \dfrac{i}{2}F_A^\gamma & = 0,
\end{split}
\end{equation}
where $\dbar_{J,A} = p_J^{0,1}\nabla_A$ is the induced Dolbeault operator on $\ad P$. The complexified infinitesimal action $P\colon \LieX^c \to T_J\cJ \times T_A\cA$ \eqref{eq:infactcpx} combined with the first order operator
\begin{equation}
\label{eq:deltilde}
\begin{array}{cccc}
\tilde{\del} : & T_J\cJ \times T_A\cA & \rightarrow & \Om^{0,2}(T^{1,0})\times \Om^{0,2}(\ad P) \\
 & (\a , \b) & \mapsto & (\delb \a,\dbar_{J,A}\b + \dfrac{i}{2}F_A^\gamma).
\end{array}
\end{equation}
define our complex of differential operators
\begin{equation}
\label{eq:complex}
\Lie{\ctG}^c  \lra{P}  T_J\cJ \times T_A\cA \lra{\tilde{\del}} \Om^{0,2}(T^{1,0})\times \Om^{0,2}(\ad P).
\end{equation}

\begin{proposition}\label{prop:elcomplex}
The sequence \eqref{eq:complex} is an elliptic complex.
\end{proposition}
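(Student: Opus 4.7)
The plan is to prove the claim in two steps: first verify that $\tilde{\del}\circ P=0$, then establish ellipticity via a Douglis--Nirenberg-type symbol analysis, exploiting the fact that $P$ and $\tilde{\del}$ are block-triangular perturbations of two standard elliptic complexes.

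For the complex property, note that $\ctG$ preserves the submanifold $\cP\subset\cJ\times\cA$, so for any $\zeta\in\Lie\ctG$ the infinitesimal action $P(\zeta)$ lies in $T_{(J,A)}\cP$. By construction, $\tilde{\del}$ is the linearisation of the integrability conditions \eqref{eq:integrability} at $(J,A)\in\cP$, so $T_{(J,A)}\cP=\ker\tilde{\del}$ and hence $\tilde{\del}(P(\zeta))=0$. For the imaginary direction, $P(\imath\zeta)=\mathbf{I}P(\zeta)$; since $\cP$ is a (possibly singular) K\"ahler submanifold of $(\cJ\times\cA,\mathbf{I})$, its tangent space at $(J,A)$ is $\mathbf{I}$-stable, so $P(\imath\zeta)\in\ker\tilde{\del}$ as well. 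Alternatively, a direct computation gives $\tilde{\del}\circ\mathbf{I}=\imath\,\tilde{\del}$ on the tangent space, yielding the same conclusion.

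For ellipticity, I would use the splitting of $\Lie\ctG$ induced by the connection $A$: every $\zeta\in\Lie\ctG$ decomposes as $\xi+A^\perp\eta_\phi$ with $\xi\in\Omega^0(\ad H)$ and $\phi\in C_0^\infty(M)$, and after complexification $\Lie\ctG^c\cong\Omega^0(\ad P)\oplus C_0^\infty(M,\CC)$. In this splitting, $P$ takes an upper-triangular block form whose diagonal entries are the Dolbeault operator $\delb_A\colon\Omega^0(\ad P)\to\Omega^{0,1}(\ad P)\cong T_A\cA$ and Sz\'ekelyhidi's second-order operator $D_J\colon C_0^\infty(M,\CC)\to T_J\cJ$, with off-diagonal entry $\phi\mapsto\iota_{\eta_\phi}F_A$ of order one. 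Similarly, $\tilde{\del}$ is upper-triangular with diagonals $\delb$ on $T_J\cJ$ and $\delb_{J,A}$ on $T_A\cA$, and zeroth-order off-diagonal curvature coupling $\gamma\mapsto\tfrac{\imath}{2}F_A^\gamma$.

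Assigning Douglis--Nirenberg weights $a=0$ to $\Omega^0(\ad P)$, $a=-1$ to $C_0^\infty$, $a=1$ to $T_J\cJ$ and $T_A\cA$, and $a=2$ to $\Omega^{0,2}(T^{1,0})$ and $\Omega^{0,2}(\ad P)$, the leading order of an entry from a space of weight $a_0$ to one of weight $a_1$ is $a_1-a_0$. A direct check then shows that in both $P$ and $\tilde{\del}$ the off-diagonal couplings fall strictly below these leading orders, so the principal symbol complex decouples as the direct sum of (i) Sz\'ekelyhidi's cscK deformation symbol complex---whose exactness at nonzero covectors, under the algebraic pointwise constraint $\omega^\gamma=0$ cutting out $T_J\cJ$, is the content of the ellipticity of the cscK deformation complex---and (ii) the classical $\ad P$-valued Dolbeault symbol complex $\Omega^0(\ad P)\to\Omega^{0,1}(\ad P)\to\Omega^{0,2}(\ad P)$. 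Since each summand is exact at every nonzero covector, so is the symbol sequence of \eqref{eq:complex}. The hardest part is the careful bookkeeping needed to justify the weight assignment and to confirm that $\iota_{\eta_\phi}F_A$ and $F_A^\gamma$ are indeed strictly subleading relative to the diagonal symbols; the subsidiary check that the algebraic constraint $\omega^\gamma=0$ is compatible with the symbol analysis is automatic but should be flagged explicitly.
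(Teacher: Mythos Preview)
Your argument is correct and follows the same strategy as the paper: both observe that the complex property follows from the $\ctG^c$-orbit lying in $\cP$, and both establish ellipticity by using the splitting induced by $A$ to reduce to the direct sum of the cscK deformation complex on the base and the $\ad P$-valued Dolbeault complex. The only difference is cosmetic: the paper parameterises $\Lie\cH^c$ by Hamiltonian vector fields rather than by functions, so all maps are first order with zeroth-order off-diagonal curvature couplings, and the triangular reduction goes through without needing your Douglis--Nirenberg weight bookkeeping.
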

\begin{proof}
Note that \eqref{eq:complex} is a complex, as the $\ctG^c$-orbit of $(J,A)$ is contained in $\cP$.
Using the connection $A$, we can write $P$ as
$$
\begin{array}{cccc}
 P_{(J,A)} : & \Lie{\ctG}  & \rightarrow & T_J\cJ \times T_A\cA \\
 & \zeta  & \mapsto & (\dbar \pr(\zeta), -\dbar_{J,A}(A\zeta)- p^{0,1}_J\iota_{\pr(\zeta)}F_A)
\end{array}
$$
and hence 
the ellipticity of the complex is a direct consequence of the ellipticity of the two following complex
$$
\Lie(\cH)^c \lra{\dbar} T_J\cJ \lra{\dbar} \Om^{0,1}(T^{1,0}),
$$
$$
\Om^0(\ad P) \lra{\dbar_A} \Om^{0,1}(\ad P) \lra{\dbar_A} \Om^{0,2}(\ad P).
$$


\end{proof}

\subsection{The Kuranishi method}\label{subsec:slice}

We build now on the elliptic complex \eqref{eq:complex} to define a slice of the $\ctG^c$-orbits in $\cJ\times\cA$ in a neighborhod of $(J,A)$. We use the Kuranishi method \cite{ku}, so we will focus on the novelties in the proof.

The K\"ahler metric on $X$ and the metrics on the spaces $\Om^*(\ad H)$ enable to define the formal adjoints
$P^*$ and $\tilde{\del}_{J,A}^*$. We set $\Delta= PP^* + \tilde{\del}^*\tilde{\del}$ and define
$$
H^1(X,L_{\om}^*):= ker(\Delta)=\lbrace x\in T_J\cJ \times T_A\cA \;:\; \tilde{\del} x=0 \text{ and } P^*x=0 \rbrace,
$$
which is finite dimensional by Proposition \ref{prop:elcomplex} (the notation will be justified in Section~\ref{sec:ap}). Let $\iK$ be the stabilizer of $(J,A)$ in $\ctG$, which is a compact group of finite dimension. As $\iK$ preserves $H^1(X,L_{\om}^*)$, it induces a representation of the complexified group $\iK^c$.

\begin{proposition}
\label{prop:slice}
There exists a map $\Phi$ from a neighborhood $U$ of $0\in H^1(X,L_{\om}^*)$ to a neighborhood of $(J,A)\in \cJ\times\cA$ such that
\begin{enumerate}
 \item $\Phi$ is $\iK$-equivariant, holomorphic, and $\Phi(0)=(J,A)$,
 \item each $\ctG^c$-orbit of integrable complex structure near $(J,A)$ intersects the image of $\Phi$,
 \item if $x$ and $x'$ are in the same $\iK^c$-orbit in $U$, then $\Phi(x)$ and $\Phi(x')$ lie in the same $\ctG^c$-orbit in $\cJ\times\cA$.
\end{enumerate}
\end{proposition}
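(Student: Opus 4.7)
The plan is to adapt the classical Kuranishi construction to the present setting, the main new ingredient being that the missing complexification of $\ctG$ must be replaced throughout by the complex exponential $\exp_{J,A}$ from \S\ref{subsec:orbits}. Ellipticity of the complex \eqref{eq:complex} (Proposition \ref{prop:elcomplex}) and Hodge theory yield an $L^2$-orthogonal, $\iK$-invariant decomposition
$$T_J\cJ \times T_A\cA = \Im P \;\oplus\; H^1(X,L_\omega^*) \;\oplus\; \Im \tilde{\del}^*,$$
with Green operator $G$ inverting $\Delta = PP^* + \tilde{\del}^*\tilde{\del}$ on the orthogonal complement of $H^1$. Since $\mathbf{I}$ from \eqref{eq:complexstructureI} preserves each summand, the three pieces are complex subspaces of $T_J\cJ\times T_A\cA$, and $\tilde{\del}$, $\tilde{\del}^*$, $G$ all commute with $\mathbf{I}$.

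To construct $\Phi$, I would first rewrite the integrability condition \eqref{eq:integrability} on a small perturbation $(J,A)+y\in\cJ\times\cA$ as $\tilde{\del}\,y + Q(y) = 0$, where $Q$ assembles the $\mathbb{C}$-bilinear brackets $[\gamma,\gamma]$, $[\beta,\beta]$ together with the coupling term of type $\tfrac{i}{2}F_A^\gamma$. For small $x\in H^1(X,L_\omega^*)$, the implicit function theorem applied to
$$(x,w)\;\longmapsto\; w + \tilde{\del}^* G\,Q(x+w)\;\in\;\Im \tilde{\del}^*$$
on $H^1\times\Im \tilde{\del}^*$, whose differential in $w$ at the origin is the identity, produces a unique small $w(x)\in\Im\tilde{\del}^*$ with $w(x) = -\tilde{\del}^*G\,Q(x+w(x))$. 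Set $\Phi(x) \defeq (J,A) + x + w(x)$. Property (1) then follows from uniqueness in the implicit function theorem: the defining equation is $\iK$-equivariant (all operators commute with the $\iK$-action) and holomorphic with respect to $\mathbf{I}$ (since $Q$ is a $\mathbb{C}$-bilinear polynomial and $\tilde{\del}^*G$ is $\mathbb{C}$-linear).

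For property (2), given an integrable $(J',A')$ near $(J,A)$, I would solve the gauge-fixing equation
$$\zeta\in(\ker P)^\perp\subset\Lie\ctG^c \;\longmapsto\; P^*\bigl(\exp_{J,A}(\zeta)^{-1}(J',A') - (J,A)\bigr)\,=\,0$$
by the implicit function theorem, using that the differential at $\zeta=0$, $(J',A')=(J,A)$ is the isomorphism $P^*P\colon(\ker P)^\perp\to\Im P^*$. The gauged pair then lies in $(J,A)+\ker P^* = (J,A)+H^1\oplus\Im \tilde{\del}^*$ and remains integrable by the discussion following \eqref{eq:cpxact}, so uniqueness of $w(x)$ in the Kuranishi equation forces it to coincide with $\Phi(x)$, with $x$ its $H^1$-component. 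Property (3) reduces via $\iK$-equivariance of $\Phi$ to the imaginary direction $k^c = \exp(\mathbf{i}\xi)$, $\xi\in\Lie \iK$, where the infinitesimal identity $\frac{d}{dt}_{|t=0}\exp_{J,A}(t\zeta)^{-1}(J,A) = P(\zeta)$ combined with step (2) identifies $\Phi(\exp(\mathbf{i}\xi)\cdot x)$ with the image of $\Phi(x)$ under a suitable element produced by the complexified action.

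I expect the main obstacle to be the careful handling of $\exp_{J,A}(\zeta)$ when acting on pairs other than $(J,A)$ itself: one must verify that the resulting action is well defined as a map into $\cJ\times\cA$ and preserves the integrable locus $\cP$, so that both the gauged pair in step (2) and the comparison in step (3) remain in the correct ambient spaces. This should follow from the intrinsic picture in \cite{AGG} of the principal $\ctG$-bundle $\cY\subset\Aut P$ equipped with its canonical connection, whose horizontal lifts encode the complexified action, and which I would invoke as a black box.
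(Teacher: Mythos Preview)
Your proposal follows the same Kuranishi strategy as the paper; the only substantive differences are packaging. Where you solve $w+\tilde{\del}^*G\,Q(x+w)=0$ on $H^1\times\Im\tilde{\del}^*$ via the implicit function theorem, the paper applies the inverse function theorem to the global map $F_1(\psi)=\psi-G\tilde{\del}^*Q(\psi)$ and then restricts its inverse to $H^1$; these are equivalent. For (3) the paper argues exactly as you do, reducing via the polar decomposition of $\iK^c$ and $\iK$-equivariance of $\Phi$ to the curve $t\mapsto\Phi(e^{it\zeta}x)$ and invoking holomorphicity.

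There is one genuine point to correct in your step (2). You propose to gauge-fix via $\exp_{J,A}(\zeta)^{-1}(J',A')$, with the exponential based at the \emph{fixed} reference point $(J,A)$; you then need the resulting pair to lie in the $\ctG^c$-orbit of $(J',A')$. The properties stated after \eqref{eq:cpxact} only guarantee this when the exponential is based at the pair on which it acts, and your proposed appeal to the bundle $\cY$ does not by itself settle whether $t\mapsto\exp_{J,A}(t\zeta)^{-1}(J',A')$ is tangent to the distribution defining the $\ctG^c$-orbits of $(J',A')$. The paper resolves this simply by basing the exponential at the moving point: it sets
\[
F_2(\zeta,\psi)=P^*\,\Upsilon\bigl(\exp_{J_\gamma,A_\beta}(\zeta)^{-1}(J_\gamma,A_\beta)\bigr),\qquad \psi=(\gamma,\beta),\ A_\beta=A+\beta-\beta^*,
\]
where $\Upsilon$ is the local inverse of the chart $(\gamma,\beta)\mapsto(J_\gamma,A+\beta-\beta^*)$. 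The $\zeta$-derivative at $0$ is again $P^*P$, and now the gauged pair is automatically in the $\ctG^c$-orbit of $(J_\gamma,A_\beta)$ by the last property of \S\ref{subsec:orbits}. With this adjustment your argument goes through.
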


\begin{proof}
Given $\psi = (\a,\b)\in T_J\cJ \times T_A\cA$, we define operators by
\begin{equation}
\label{eq:operatorQ}
\begin{split}
E_1(\psi) & =\delb \a - [ \a,\a ],\\
E_2(\psi) & =\pr_{J_\a}^{0,2}(F_A+d_A\b+[\b,\b]).
\end{split}
\end{equation}
We set $E=(E_1,E_2)$ and
$$
\Psi=\lbrace \psi\in  T_J\cJ \times T_A\cA \;:\;  E(\psi) = 0,\; P^*(\psi)=0\rbrace,
$$
parameterizing integrable pairs around $(J,A)$. The image of the map that we are looking for will contain a neighborhood of $0\in \Psi$ such that its preimage is an analytic subspace of $H^1(X,L_{\om}^*)$, as in \cite{ku}. Actually, in the ideal case the image is precisely a neighborhood of the origin.

Consider the operator $Q=\tilde{\del} - E$ and the green operator $G$ of $\Delta$ on $T_J\cJ \times T_A\cA$. For any $\psi\in \Psi$
$$
\Delta \psi -\tilde{\del}^*Q( \psi)=0
$$
and applying Green operator to this equality, we obtain
\begin{equation}
\label{eq:harmonicpsi}
\forall \psi\in \Psi, \; \psi - G\tilde{\del}^* Q(\psi)\in H^1(X,L_{\om}^*).
\end{equation}
Then we introduce
$$
\begin{array}{cccc}
 F_1 : & T_J\cJ \times T_A\cA & \rightarrow & T_J\cJ \times T_A\cA \\
 & \psi & \mapsto & \psi - G\tilde{\del}^* Q(\psi)
\end{array}
$$
Using the metrics on $\Om^*(T^{1,0})$ and $\Om^*(\ad H)$, together with the covariant derivatives $\nabla^g$ and $\nabla_A$, we endow  the spaces $\Om^*(T^{1,0})\times \Om^*(\ad H)$ with Sobolev norms $\vert\vert\cdot\vert\vert_k$.
Then the identification \eqref{eq:tangentJA} and the splitting induced by $A$
$$
 0 \to \LieG\simeq \Om^0(\ad H) \lra{\iota} \LieX \lra{\pr} \LieH\subset\Om^0(T^{1,0}) \to 0
$$
induce corresponding norms on $\LieX$ and $T_J\cJ\times T_A\cA$. By \eqref{eq:operatorQ}, $Q$ is a composition of linear and bilinear continuous operators with respect to these norms applied to $\a,\b$ and $d_A\b$, thus $F_1$ extends in a unique way to a continuous operator from the Banach completion $\cB^k$ of $T_J\cJ \times T_A\cA$ with respect to $\vert\vert\cdot\vert\vert_k$ to itself, that we denote also by $F_1$. Its differential at $0$ is the identity, thus by the inverse function theorem, we can define a continuous local inverse of $F_1$ at $0$ from a neghborhood $B$ of $0$ in $\cB^k$ to itself.

Let $F^{-1}_U$ be the restriction of $F^{-1}_1$ to $U=B\cap H^1(X,L_{\om}^*)$. For any $x = F_1(\gamma_x,\beta_x) \in U$ we define
$$
\Phi(x) = (J_{\gamma_x},A + \b_x - \b_x^*).
$$
For each $t \in B$ 
we have that $\Delta F_U^{-1}(t) - \tilde{\del}^* Q F_U^{-1}(t)$ is harmonic, and by elliptic regularity $F_U^{-1}(t)$ is smooth. Then $\Phi$ defines a map
$$
\Phi \colon U \subset H^1(X,L_{\om}^*) \to \cJ\times \cA
$$
such that $\Phi(0)=(J,A)$.

We prove now that $\Phi$ has the desired properties. As $F_1$ is composition of linear and bilinear maps, and the map
\begin{equation}\label{eq:Chernmap}
(\a,\b) \to (J_\gamma,A+\b - \b^*)
\end{equation}
is holomorphic, so is $\Phi$. Note here that the second factor on the RHS corresponds to the Chern connection of $I_{J,A + \b}$. These maps are clearly $K$-invariant 
and hence $\Phi$ satisfies $(1)$. To prove $(2)$, we denote by $\Upsilon$ the inverse of \eqref{eq:Chernmap}, defined only in a neighbourhood of $(J,A)$. Let $(\operatorname{Ker}P)^{\perp}$ be the orthogonal complement of $\operatorname{Ker}P$ \eqref{eq:infactcpx} in the Banach completion of $\Lie \ctG^c$. Using the exponential map \eqref{eq:exp}, we define
$$
\begin{array}{cccc}
F_2 : & (\operatorname{Ker}P)^\perp \times T_J\cJ \times T_A\cA & \rightarrow & \Lie \ctG^c \\
& (\zeta, \psi) & \mapsto & P^*\Upsilon(\exp_{J_\gamma,A_\beta}(\zeta)^{-1}(J_\gamma,A_\beta)
 ),
\end{array}
$$
where $\psi = (\a,\b)$ and $A_\beta = A + \beta - \beta^*$.
The differential of $F_2$ with respect to $\zeta$ at $0$ is $P^*P$, and we proceed as for the map $F_1$ to use the implicit function theorem and obtain a map $\psi \mapsto \zeta(\psi)$ from a neighborhood of $0\in T_J\cJ \times T_A\cA$
to a neighborhood of zero in $(\operatorname{Ker}P)^{\perp}$ satisfying $F_2(\zeta(\psi),\psi) = 0$.
Then, up to shrinking $U$ we have the desired slice properties. 

To prove $(3)$, we assume without lost of generality that $U$ is a ball around the origin with respect to the induced constant metric on $T_J\cJ \times T_A\cA$. By the polar decomposition of $\iK^c$ and the $K$-equivariance of $\Phi$, it is enough to check that $\Phi(x)$ and $\Phi(e^{i\zeta}x)$ lie in the same $\ctG^c$-orbit for any $\zeta \in \Lie \iK$. By convexity of the norm squared along $t \to e^{it\zeta}x$ we have a well defined curve
\[
t \to \Phi(e^{it\zeta}x)
\]
for $t \in [0,1]$. Now, by holomorphicity of $\Phi$ the point $\Phi(e^{it\zeta}x)$ is in the $\cG^c$-orbit of $\Phi(x)$ for all $t$, proving $(3)$.
\end{proof}

\section{Deformation of solutions of the CKYM equations}\label{sec:deformCKYM}

In this section, we apply Proposition~\ref{prop:slice} to the problem of deforming solutions of \eqref{eq:CKYM0}. A triple $(X,L,E)$ will stand for a holomorphic principal $G^c$-bundle $E\rightarrow X$ over
a compact complex manifold $X$, polarized by an ample line bundle $L\rightarrow X$. In this framework, we can regard \eqref{eq:CKYM0} as equations for pairs $(\omega,H)$, where $\omega \in c_1(L)$ is a K\"ahler form and $H$ is a reduction of $E$ to the maximal compact $G$. Fixing $(\omega,H)$, we have associated spaces of compatible pairs $\cP \subset \cJ \times \cA$ acted by the corresponding extended gauge group $\ctG$, with a distinguished point $(J,A)$ given by the holomorphic structures on $X$ and $E$.


\begin{definition}
A \emph{complex deformation of $(X,L,E)$} is a triple $(\cX,\cL,\cE)$ together with a proper holomorphic submersion $\cX \rightarrow \Delta$ with $\Delta$ an open subset of $\CC^p$ containing $0$ such that
\begin{itemize}
\item $\cE$ is a holomorphic principal $G^c$-bundle $\cE\rightarrow \cX$ over
a complex manifold $\cX$, polarized by a line bundle $\cL\rightarrow \cX$,
\item for each $t\in\Delta$, the fiber $\cE_t \to \cX_t$ is by restriction a holomorphic principal $G^c$-bundle and $\cL_t \to \cX_t$ is ample,
\item the triple $(\cX_0,\cL_0,\cE_0)$ is isomorphic to $(X,L,E)$.
\end{itemize}
We say that $(X',L',E')$ is a \emph{small complex deformation} if there exists a complex deformation of $(X,L,E)$ over some $\Delta$ such that $(X',L',E')$ is isomorphic to a fiber over $\Delta$ close to $0$.
\end{definition}

Given a complex deformation $(\cX,\cL,\cE)$ of $(X,L,E)$, by a result of Ehresmann (see e.g. \cite{voi}), we can see $\cX$ as a smooth family of integrable almost-complex structures on $X$. Similarly, we can consider $\cE$ and $\cL$ as smooth families of holomorphic structures over a fixed smooth bundles over $X$. Fixing $(\omega,H)$ on $(X,L,E)$, as before, a small complex deformation $(X',L',E')$ corresponds
to a point $x'$ in $H^1(X,L_{\om}^*)$. As $L'$ and $L$ are isomorphic as smooth bundles
we have $c_1(L) = c_1(L')$, and using Moser's trick we can assume that the almost complex structure $J'$ defining $X'$ is $\om$-compatible.
As $E'$ is seen as a smooth holomorphic structure over a fixed differentiable principal $G^c$ bundle over $X$, it defines a Chern connections $A'$ with respect to $H$, and the pair $(J',A')$ corresponds to an element in $\cP \subset \cJ\times\cA$ close to $(J,A)$. Then, the complexified orbit of $(J',A')$ intersects the image of the $K$-equivariant map $\Phi$ constructed in Proposition \ref{prop:slice}.

We now proceed with the proof of Theorem \ref{theo:finitestab}. As in \cite{sz}, we reduce the proof to a finite dimensional problem. Consider the pull-back K\"ahler form $\Phi^*\omega_\alpha$ on $H^1(X,L_{\om}^*)$, with Hamiltonian action of $K$ and moment map $\Phi^*\mu_\alpha$. We wish now to apply the following proposition due to G. Sz\'ekelyhidi \cite[Proposition 8]{sz}, generalizing a previous result by Donaldson in \cite{D2}.

\begin{proposition}\label{propo:Gabor}
Let $V$ be a complex vector space, endowed with K\"ahler form and a Hamiltonian linear action of a compact group $K$. Let $\mu$ be  a moment map with $\mu(0) = 0$. 
Then, there exists a neighbourhood $U$ of $0 \in V$ such that any polystable $K^c$-orbit intersecting $U$ contains a zero of the moment map in $U$.
\end{proposition}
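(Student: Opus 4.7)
My plan is to follow Sz\'ekelyhidi's standard finite-dimensional Kempf--Ness argument: study the downward gradient flow of $f(v) = \tfrac{1}{2}\|\mu(v)\|^2$ with respect to the K\"ahler metric on $V$, use closedness of a polystable orbit to produce a critical point (necessarily a zero of $\mu$), and use linearity of the action to keep this critical point inside a prescribed neighborhood. Identifying $\mathfrak{k}$ with $\mathfrak{k}^*$ via a fixed $\operatorname{Ad}_K$-invariant inner product, the moment map identity $d\mu_v(w)(\xi) = \omega(\sigma_\xi(v), w)$, where $\sigma_\xi$ denotes the infinitesimal $K$-action of $\xi \in \mathfrak{k}$, combined with $\omega(\cdot, J\cdot)$ being the metric, gives
\[
\nabla f(v) \;=\; J\,\sigma_{\mu(v)}(v).
\]
In particular $\nabla f$ is tangent to the $K^c$-orbit through $v$, so the flow preserves $K^c$-orbits and $f$ decreases monotonically along trajectories.

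Let $\mathcal{O} = K^c\cdot v_0$ be a polystable (i.e.\ closed) orbit intersecting $U$. Since $\mathcal{O}$ is closed in $V$, the restriction $f|_\mathcal{O}$ is proper on sublevel sets, so the gradient trajectory starting at $v_0$ exists for all $t \geq 0$ and has a subsequential limit $v_\infty \in \mathcal{O}$ with $\nabla f(v_\infty) = 0$. From the displayed formula for $\nabla f$ this means $\sigma_{\mu(v_\infty)}(v_\infty) = 0$, i.e.\ $\mu(v_\infty)$ lies in the infinitesimal stabilizer of $v_\infty$. Pairing this equation against $\mu(v_\infty)$ via the moment map identity once more forces $\|\mu(v_\infty)\|^2 = 0$, giving the usual Kempf--Ness conclusion that every polystable orbit contains a zero of $\mu$.

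The main remaining point, and the source of the principal obstacle, is to ensure $v_\infty \in U$. Here linearity of the $K^c$-action is essential: since $\mu(0) = 0$ and $\mu$ is quadratic in $v$, there are constants such that $\|\mu(v)\| \leq C_1\|v\|^2$ and $\|\nabla f(v)\| \leq C_2\|v\|^3$ near $0$. Combining the energy identity $\int_0^\infty \|\nabla f(v(t))\|^2\,dt = f(v_0) - f(v_\infty) \leq f(v_0) \leq C_1^2\|v_0\|^4/2$ with the elementary bound $\bigl|\tfrac{d}{dt}\|v(t)\|^2\bigr| \leq 2\|v(t)\|\,\|\nabla f(v(t))\|$ and a bootstrap using the polynomial estimates just stated, one obtains a bound of the form $\|v_\infty\| \leq \|v_0\| + C\|v_0\|^{1+\eta}$ for some $\eta > 0$. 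Taking $U$ to be a sufficiently small ball then guarantees $v_\infty \in U$. The subtle point is precisely this trajectory-length estimate: without it the gradient flow could a priori carry $v_\infty$ far from $0$ despite $f$ remaining small, and in more general settings a \L{}ojasiewicz-type inequality would be needed; here the quadratic structure of $\mu$ is what makes the required estimate essentially elementary.
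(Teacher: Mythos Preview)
The paper does not give its own proof of this proposition; it is quoted from Sz\'ekelyhidi \cite[Proposition~8]{sz}. More importantly, the sentence the paper places immediately after the statement is the key hypothesis you have overlooked: \emph{the K\"ahler form may only be defined in an open $K$-invariant neighbourhood of the origin}, and \emph{the result follows from Kempf--Ness when the K\"ahler form is constant over $V$}. In other words, the non-flat, locally defined case is the entire content of the proposition; the flat case is singled out as already classical.

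Your argument implicitly assumes the flat case throughout. The assertion ``$\mu$ is quadratic in $v$'' is exactly what a constant K\"ahler form would give; with a general $\omega$ there is no such structure, so the estimates $\|\mu(v)\|\le C_1\|v\|^2$, $\|\nabla f(v)\|\le C_2\|v\|^3$ and the subsequent trajectory-length bootstrap are unfounded. Worse, the gradient flow of $f=\tfrac12\|\mu\|^2$ is only defined where $\omega$ is, so you cannot invoke closedness of $K^c\cdot v_0$ in $V$ to obtain long-time existence or properness of $f$ on the orbit: the trajectory may leave the domain of definition before reaching a critical point. Finally, the step ``pairing once more forces $\|\mu(v_\infty)\|^2=0$'' is not a proof: $\sigma_{\mu(v_\infty)}(v_\infty)=0$ only says $\mu(v_\infty)$ lies in the stabiliser of $v_\infty$. (There is a correct argument here in the linear setting---integrate $t\mapsto\langle\mu(tv_\infty),\mu(v_\infty)\rangle$ from $t=0$ using linearity of $\sigma$ and $\mu(0)=0$---but it again needs $\omega$ defined along the whole segment $[0,v_\infty]$.) Sz\'ekelyhidi's actual argument proceeds differently, by comparing the given moment map with its linearisation at $0$ (for which Kempf--Ness applies) and perturbing zeros of the latter to zeros of the former; this is why $\mu(0)=0$ and the linearity of the action are the essential hypotheses. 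Your route, as written, recovers only the flat case already flagged as known.
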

Here, the K\"ahler form may only be defined in an open $K$-invariant neighborhood of the origin. The polystability of a $K^c$-orbit in $V$ is with respect to the trivial line bundle with trivial linearization. Equivalently, a point $x \in V$ is polystable if and only if $K^c \cdot x \subset V$ is closed. Note that the result follows from Kempf-Ness Theorem when the K\"ahler form is constant over $V$.

The direct application of Proposition \ref{propo:Gabor} does not imply Theorem \ref{theo:finitestab}, as the vanishing of $\Phi^*\mu_\alpha\colon U \to \mathfrak{k}^*$ is a priori a weaker condition than \eqref{eq:CKYM0}. Because of this, we need to modify the slice $\Phi$ built in Section~\ref{subsec:slice}.

\begin{proof}[Proof of Theorem \ref{theo:finitestab}]

Assume that $(X,L,E)$ admits a solution $(\omega,H)$ of \eqref{eq:CKYM0}, that we identify with $(J,A) \in \cP$, a zero of the moment map $\mu_\alpha$. 
We define a map
$$
\mu_\alpha^* \colon \cJ \times \cA \to \Lie \ctG \colon (J,A) \mapsto 4\alpha_1 (\Lambda F_{A} - z) - A^\perp \eta
$$
where $\eta$ is the Hamiltonian vector corresponding to
$$
S_J - \alpha \Lambda^2 \tr F_A \wedge F_A + 4\alpha \Lambda \tr F_A\wedge z - c_z \in C^\infty(X)
$$
Using the exponential map (\ref{eq:exp}), we can deform the map $\Phi$ along complexified orbits so that the map $\mu_{\alpha}^*\circ \Phi$ takes values in $\k$, the Lie algebra of $\iK$. We denote by $\mathfrak{k}^\perp$ the orthogonal of $\mathfrak{k} = \operatorname{Ker}P$ in the Banach completion of $\Lie \cG$. 
Let $W$ be a sufficiently small neighborhood of $0$ in $\mathfrak{k}^\perp$. Consider the following operator:
$$
\begin{array}{cccc}
F_3 : & U \times W & \rightarrow & \mathfrak{k}^\perp  \\
& (x, \zeta) & \mapsto & \Pi\; \mu_{\alpha}^*(\exp_{\Phi(x)}(\zeta)^{-1}(\Phi(x))),
\end{array}
$$
where $\Pi$ is the $L^2$ projection of the Banach completion of $\Lie \ctG$ onto $\mathfrak{k}^\perp$. Then, as $(J,A)$ satisfies \eqref{eq:CKYM}, by \cite[Lemma 4.5]{AGG}, the differential of $F_3$ with respect to $\zeta$ at $0$ is $P^*P$. Applying the implicit function theorem to $F_3$, we obtain a new map that we still denote by $\Phi$ such that the composition by $\mu_{\alpha}^*$ takes value in $\mathfrak{k}$. 
Finally, to obtain a finite dimensional moment map out of $\mu_\alpha^*$, we define a family of non-degenerate pairings (cf. \cite[Remark 4.4]{AGG})
$$
U \to S^2\mathfrak{k}^*\colon x \to (\cdot,\cdot)_x,
$$
where
$$
(\zeta_0,\zeta_1)_x = \int_X \phi_0 \phi_1 \vol_\omega + \int_X (A_x\zeta_0,A_x\zeta_1)\vol_\omega
$$
and $\Phi(x) = (J_x,A_x)$. Then, it is easy to see that
$$
(\mu_\alpha^*\circ \Phi (x),\cdot)_x = \Phi^*\mu_\alpha (x)
$$
and hence the LHS defines a moment map for the $K$-action on $(U,\Phi^*\omega_\alpha)$. The proof follows now applying Proposition \ref{propo:Gabor}.
\end{proof}

We finish this section with some comments on the relation between the sufficient condition for the existence of solutions of \eqref{eq:CKYM0} in Theorem \ref{theo:finitestab} and $\alpha$-K-stability \cite{GF}. Assuming $G^c = SL(r,\CC)$, we can regard $E$ as a holomorphic vector bundle over $X$ with trivial determinant. Following \cite{sz}, it is easy to check that if the small deformation $(X',L',E')$ is $\alpha$-K-polystable then the corresponding point $x \in H^1(X,L_\omega^*)$ is polystable. This follows assuming that $x$ is not polystable and constructing a test configuration (in the sense of \cite{GF}) with smooth central fibre non isomorphic to $(X,L,E)$ and zero Futaki invariant, out of the $1$-parameter subgroup that destabilizes $x$. However, we should emphasize that to the knowledge of the authors there is no strong evidence that $\alpha$-K-stability is even a necessary condition for the existence of solutions of \eqref{eq:CKYM0}.

\section{The complexes $L^*_\om$ and $L^*$}
\label{sec:ap}

This section is rather technical. We first extend the elliptic complex in Section \ref{subsec:complex} to a complex $L^*_\om$. Then, we compare $L^*_\om$ with a standard complex $L^*$ encoding deformations of pairs $(X,E)$, considered previously by Huang \cite{Hu} (see also \cite{Griff}). The aim is to prove Proposition \ref{prop:compare}, that will enable to construct new examples of solutions of \eqref{eq:CKYM0} in Section \ref{sec:example}.

\subsection{Extension of the complex}\label{subsec:complexext}

The elliptic complex (\ref{eq:complex}) encodes infinitesimal deformations of integrable pairs that are compatible with the symplectic form $\om$ on $X$. A generalisation of the compatibility with $\om$ to higher degree forms enables to extend this elliptic complex. We use the notation in \eqref{eq:sigmagamma}.
\begin{definition}\label{def:omcomp}
For $i\geq 1$ a form $\a\in\Om^{0,i}(X,T^{1,0})$ is $\om$-compatible if it satisfies
\begin{equation}\label{eq:omcomp}
\om^\a = 0.
\end{equation}
\end{definition}
Denote by $\Om_{\om}^{0,*}(X,T^{1,0})$ the vector space of $\om$-compatible forms and define $\Om_{\om}^{0,0}(X,T^{1,0})$ to be the space of infinitesimal hamiltonian (complex) vector fields, that is vector fields $y_f$ satisfying
$$
\delb f = \om(y_f,\cdot)
$$
for some smooth complex valued function $f$. For $i\geq 0$, we define
 \begin{equation}
 \label{eq:Lomega}
 L^i_{\om}= \Om^{0,i}_{\om}(X,T^{1,0})\times \Om^{0,i}(X,\ad P).
 \end{equation}
Then, $L_\om^1=T_J\cJ \times T_A\cA$ and the operator $\tilde{\del}$ \eqref{eq:deltilde} extends to an operator
\begin{equation}
\label{eq:extension}
\begin{array}{cccc}
\tilde{\del} : & L^{i}_{\omega} & \rightarrow & L^{i+1}_{\omega} \\
 &(\gamma, \beta) & \mapsto & (\delb \a,\dbar_{J,A}\b + \frac{i}{2}F_A^\a)
\end{array}
\end{equation}
where $\dbar_{J,A} = p_J^{0,i+1}\nabla_A$. We will denote by $H^{i}(X,L_{\om}^*)$ the cohomology groups associated to this complex.

\begin{lemma}\label{lem:extension}
$(L^i_{\om},\tilde{\del})$ is an elliptic complex that extends the complex (\ref{eq:complex}).
\end{lemma}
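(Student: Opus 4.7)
The plan is to verify four properties of the sequence $(L^i_{\om}, \tilde{\del})$ in turn: well-definedness on $\omega$-compatible forms, the property $\tilde{\del}^2=0$, ellipticity, and agreement with \eqref{eq:complex} in low degrees.

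First I would check that $\tilde{\del}$ maps $L^i_\omega$ to $L^{i+1}_\omega$, i.e.\ preserves $\omega$-compatibility in the $T^{1,0}$ component. Since $\om$ is K\"ahler, $\delb\om = 0$, and a direct Leibniz-type computation in local coordinates shows that, for any $\gamma \in \Om^{0,i}(T^{1,0})$, $\delb(\om^\gamma) = \pm\om^{\delb\gamma}$. Consequently $\om^\gamma = 0$ implies $\om^{\delb\gamma}=0$, so $\delb \gamma \in \Om^{0,i+1}_\om(T^{1,0})$. No constraint needs to be checked on the $\ad P$ component. This also justifies extending \eqref{eq:complex} in degree $2$ into the $\omega$-compatible subspace, since the image of $\tilde{\del}$ already lands there.

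Next, for $\tilde{\del}^2 = 0$: on the $T^{1,0}$ factor this reduces to $\delb^2=0$ on the holomorphic bundle $T^{1,0}X$. On the $\ad P$ factor we need
\[
\dbar_{J,A}^2\b + \tfrac{i}{2}\bigl(\dbar_{J,A} F_A^\gamma + F_A^{\delb \gamma}\bigr)=0.
\]
The first term vanishes because $(J,A)\in \cP$ forces $F_A\in\Om^{1,1}(\ad P)$, hence $\dbar_{J,A}^2 = F_A^{0,2}\wedge \cdot =0$. The remaining bracket vanishes by combining the Leibniz rule for the contraction $\sigma \mapsto \sigma^\gamma$ with the Bianchi identity $d_A F_A = 0$, whose type decomposition yields $\dbar_{J,A} F_A = 0$. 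The signs are fixed so that the cross terms cancel exactly, giving $\tilde{\del}^2=0$.

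For ellipticity, observe that the zeroth-order coupling term $\tfrac{i}{2}F_A^\gamma$ does not affect the principal symbol, so the symbol complex of $(L^i_\om,\tilde{\del})$ at a covector $\xi \neq 0$ is the direct sum of the Dolbeault symbol complexes on $T^{1,0}X$ and $\ad P$, intersected with the $\omega$-compatibility condition. The Dolbeault complexes with coefficients in a holomorphic bundle are elliptic, and the pointwise linear constraint defining $\Om^{0,*}_\om(T^{1,0})$ respects the symbol exactness (this is the standard Kodaira--Spencer--Kuranishi setup in the K\"ahler case); alternatively, since $\omega$-compatibility is a linear kernel condition compatible with the Dolbeault symbol, one verifies exactness of the restricted symbol sequence fibrewise.

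Finally, to check that $(L^i_\om,\tilde{\del})$ extends \eqref{eq:complex}, I would identify $L^0_\om$ with $\Lie\ctG^c$. The first factor $\Om^{0,0}_\om(T^{1,0})$ consists of complex vector fields $y_f$ with $\delb f = \om(y_f,\cdot)$, which is the standard model for $\Lie\cH^c$, while $\Om^{0,0}(\ad P) \cong \Lie\cG^c$. The splitting of $\Lie\ctG$ induced by the connection $A$, complexified, gives $\Lie\ctG^c \cong \Lie\cG^c \oplus \Lie\cH^c$, matching $L^0_\om$. Under this identification $\tilde{\del}(y_f,\xi) = (\delb y_f,\, \dbar_{J,A}\xi + \tfrac{i}{2}\iota_{y_f}F_A)$ agrees with the formula for $P$ derived in the proof of Proposition~\ref{prop:elcomplex} up to the chosen conventions. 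The degree $1 \to 2$ map is by construction the same $\tilde{\del}$ from \eqref{eq:deltilde}.

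The main obstacle will be step two: carefully verifying the sign in the Leibniz formula $\dbar_{J,A}(F_A^\gamma) + F_A^{\delb\gamma}=0$ from Bianchi's identity, since the convention for skew-symmetrisation in \eqref{eq:sigmagamma} must be tracked throughout. The ellipticity on the $\om$-compatible subspace is standard but also requires some care in identifying the symbol sequence with the classical one.
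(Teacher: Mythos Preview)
Your approach is essentially the same as the paper's. The paper's proof is terser: it records only the formula $\omega^{\delb\gamma} = (\delb\omega)^\gamma$ (under the hypothesis $\omega^\gamma=0$) for well-definedness, and the identity $\delb_{J,A}(F_A^\gamma) + F_A^{\delb\gamma} = (\delb_{J,A}F_A)^\gamma = 0$ from Bianchi for $\tilde{\del}^2=0$, leaving ellipticity and the identification $L^0_\om \cong \Lie\ctG^c$ implicit (the former having already been argued in Proposition~\ref{prop:elcomplex}). Your write-up makes these two remaining points explicit, which is fine; the substance is identical.
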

\begin{proof}
We claim that $\tilde{\del}$ is well defined. This follows from the following formula
$$
\omega^{\dbar \a} = (\dbar\omega)^\a,
$$
which can be easily deduced from the compatibility of $\a$.
Now we prove that $\tilde{\del}\circ\tilde{\del}=0$. From the integrability of $(J,A)$, this is equivalent to
$$
\delb_{J,A}(F_A^\a) + F_A^{\delb \a} = (\delb_{J,A} F_A)^\a = 0,
$$
that follows from the Bianchi identity $d_AF_A=0$.
\end{proof}

\subsection{Comparison with Huang's complex}
In \cite{Hu} (cf. \cite{Griff}), Huang built an elliptic complex encoding deformations of pairs given by a complex manifold $X$ together
with a holomorphic vector bundle $E \rightarrow X$. The aim of this section is to relate the complex built in Section \ref{subsec:complexext} with Huang's complex.

Without harm, we assume that $E$ is a principal $G^c$-bundle over $X$. Consider the holomorphic Atiyah sequence (see e.g. \cite{Fj})
\begin{equation}
 \label{seq:bundles}
0 \rightarrow \ad E \rightarrow T^{1,0}_E \rightarrow T^{1,0} \rightarrow 0,
\end{equation}
where $T^{1,0}_E$ denotes the holomorphic vector bundle $T^{1,0}E/G^c \to X$. Then, Huang's complex is the elliptic complex
$$
L^*=(\Om^{0,*}(T^{1,0}_E), \tilde{\del}_E),
$$
induced by the Dolbeault operator $\tilde{\del}_E$ in $T^{1,0}_E$. Geometrically, it can be identified with the $G^c$-equivariant part of the standard complex $(\Omega^{0,*}(T^{1,0}E),\dbar_E)$ on the total space of $E$.

In \cite{Hu}, the complex $L^*$ was used to construct local moduli spaces of complex structures on the differentiable manifolds underlying $E$ and $X$. To link with the discussion in \cite[Section 1.1]{Hu}, note that, fixing a reduction to $G$, the curvature of the associated Chern connection defines a class in $H^1(X,\Omega^{1,0}(\ad E))$, which corresponds to the sequence \eqref{seq:bundles}. Similarly, our complex $L_{\om}^*$ \eqref{eq:extension} can be used to build local moduli of complex structures with an additional compatibility with $\om$. We will see that, in some special cases, we can relate the cohomology groups of $L^*$ with those of $L^*_\om$.

We need some preliminary results about the cohomology groups $H_{\om}^{0,i}(X,T^{1,0})$ of the complex $(\Om_{\om}^{0,i}(T^{1,0}),\delb)$. Apparently this complex has been considered previously in \cite[Th. 7.1]{Fj}, but our next result does not seem to be in the literature. Note that there are injections
$$
\Om_{\om}^{0,i}(T^{1,0}) \hookrightarrow \Om^{0,i}(T^{1,0})
$$
that induce maps in cohomology
$$
H_{\om}^{0,1}(X,T^{1,0})\rightarrow H^{0,1}(X,T^{1,0}).
$$
%
%

\begin{lemma}
\label{lem:compare2}
The map $H_{\om}^{0,1}(X,T^{1,0})\rightarrow H^{0,1}(X,T^{1,0})$ is surjective if $h^{0,2}(X)=0$ and injective if $h^{0,1}(X)=0$.
\end{lemma}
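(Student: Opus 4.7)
The plan is to reduce both directions to the vanishing of standard Dolbeault cohomology groups, via a K\"ahler-type commutation between $\dbar$ and the contraction $\gamma \mapsto \om^\gamma$. The central ingredient that I would establish first is the identity
\[
\dbar(\om^\gamma) = c_i\,\om^{\dbar\gamma},\qquad \gamma \in \Om^{0,i}(X, T^{1,0}),
\]
holding on any K\"ahler manifold with a nonzero convention-dependent constant $c_i$. In local coordinates the difference between the two sides is a term proportional to $\gamma$ contracted with $\partial_{\bar p} g_{j\bar q} - \partial_{\bar q} g_{j\bar p}$, which vanishes by the local form of the K\"ahler condition $\dbar\om = 0$; I will absorb $c_i$ into the auxiliary function chosen below. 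I will also invoke the musical bundle isomorphism $\Om^{0}(T^{1,0}) \xrightarrow{\sim} \Om^{0,1}$, $f \mapsto \om^f = \iota_f \om$, so that every smooth $(0,1)$-form on $X$ is of the form $\om^f$ for a unique $f$.

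For surjectivity when $h^{0,2}(X) = 0$, I would start from a $\dbar$-closed $\alpha \in \Om^{0,1}(X, T^{1,0})$. The commutation identity together with $\dbar\alpha = 0$ gives that $\om^\alpha \in \Om^{0,2}(X)$ is $\dbar$-closed, and the hypothesis $h^{0,2}(X) = 0$ produces $\eta \in \Om^{0,1}(X)$ with $\om^\alpha = \dbar\eta$. The musical isomorphism then yields a unique $f \in \Om^{0}(T^{1,0})$ with $\om^f = \eta$, and I would set $\alpha' := \alpha - \dbar f$. This $\alpha'$ is $\dbar$-closed, satisfies $\om^{\alpha'} = \om^\alpha - \dbar(\om^f) = 0$ so that $\alpha' \in \Om^{0,1}_\om(T^{1,0})$, and still represents $[\alpha]$ in $H^{0,1}(X, T^{1,0})$.

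For injectivity when $h^{0,1}(X) = 0$, I would take $\alpha \in \Om^{0,1}_\om(T^{1,0})$ that is $\dbar$-closed and becomes trivial in $H^{0,1}(X, T^{1,0})$, so $\alpha = \dbar f$ for some $f \in \Om^{0}(T^{1,0})$. The commutation identity and $\om^\alpha = 0$ give $\dbar(\om^f) = \om^{\dbar f} = \om^\alpha = 0$, so $\om^f$ is a $\dbar$-closed $(0,1)$-form and the hypothesis furnishes $g \in C^\infty(X, \CC)$ with $\om^f = \dbar g$. This is the defining condition for $f \in \Om^{0,0}_\om(T^{1,0})$, hence $\alpha = \dbar f$ is a coboundary in the subcomplex $L^*_\om$, giving $[\alpha] = 0$ in $H^{0,1}_\om(X, T^{1,0})$.

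The main technical obstacle will be the bookkeeping of the degree-dependent constant $c_i$ and signs in the commutation identity, so that the cancellation $\om^\alpha - \dbar(\om^f) = 0$ in the surjectivity step and $\dbar(\om^f) = \om^{\dbar f}$ in the injectivity step hold on the nose rather than only up to scale. This is a purely formal issue once $\dbar\om = 0$ is available, and reflects the fact that the map $\gamma \mapsto \om^\gamma$ descends to an isomorphism of cochain complexes $\Om^{0,*}(T^{1,0})/\Om^{0,*}_\om(T^{1,0}) \cong \Om^{0,*+1}(X)$ intertwining (a constant multiple of) the Dolbeault differential.
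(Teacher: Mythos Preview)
Your argument is correct, and for injectivity it coincides with the paper's proof almost verbatim (their $V$ is your $f$, and their ``dual of $V$ with respect to $\om$'' is your $\om^f$). The commutation identity you isolate is genuine: in a holomorphic frame one has $\dbar(\om^\gamma)-\om^{\dbar\gamma}=(\partial_{\bar l}g_{j\bar k})\,d\bar z^l\wedge\gamma^j\wedge d\bar z^k$, which vanishes by the K\"ahler symmetry $\partial_{\bar l}g_{j\bar k}=\partial_{\bar k}g_{j\bar l}$, so $c_i=1$ and your bookkeeping concern evaporates.

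Where you diverge from the paper is in the surjectivity step. The authors pick the \emph{harmonic} representative $\gamma\in\ker\dbar\cap\ker\dbar^*$ and argue that $\om^\gamma$ is itself harmonic (they verify $d\om^\gamma=0$ and, via a local orthonormal frame, $d^*\om^\gamma=0$), hence zero by $h^{0,2}(X)=0$; so the harmonic representative already lies in $\Om^{0,1}_\om(T^{1,0})$. You instead keep an arbitrary closed representative and correct it by a $\dbar$-coboundary. Your route is more conceptual---it amounts to observing that $\gamma\mapsto\om^\gamma$ is a surjective chain map $\Om^{0,*}(T^{1,0})\to\Om^{0,*+1}(X)$ with kernel $\Om^{0,*}_\om(T^{1,0})$, so the statement is just the connecting-map portion of the induced long exact sequence---and it avoids the adjoint computation. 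The paper's route yields the marginally stronger fact that harmonic $T^{1,0}$-valued $(0,1)$-forms are automatically $\om$-compatible when $h^{0,2}=0$, though this is not used elsewhere.
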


\begin{proof}
We first show the surjectivity assuming $h^{0,2}(X)=0$. Let $\gamma$ be an element of $\Om^{0,1}(X,T^{1,0})$ and assume that $\gamma\in  ker(\delb)\cap ker(\delb^*)\simeq H^{0,1}(X,T^{1,0})$. We want to prove $\om^\a = 0$. The form $\om^\gamma$ is of type $(0,2)$, so it suffices to prove that it is harmonic. By compactness of $X$, $d \gamma = 0$ and hence
$$
d\om^\a = \om^{d\a} = 0.
$$
Let $e_i$ be a local orthonormal frame for $\om$. Using $d^*\gamma =0$ we also have
$$
d^*\om^{\gamma}= \sum_i  \om(e_i, \nabla_{e_i} \gamma)= d (tr_{\om} \gamma) = 0.
$$
Assume now $h^{0,1}(X)=0$ and let $\alpha \in \Om_{\om}^{0,1}(T^{1,0})$ such that $\delb \alpha =0$ and $[\alpha]=0$ in $H^{0,1}(X,T^{1,0})$. Then there is $V$ in $\Om^{0,0}(X,T^{1,0})$ such that $\delb V = \alpha$. Let $\gamma\in \Om^{0,1}(X)$ be the dual of $V$ with respect to $\om$. From $\om^\alpha =0$, we obtain $\delb\gamma=0$ as in Lemma~\ref{lem:extension}. Using the Hodge decomposition of forms, we deduce that $\gamma=\delb f$ for some complex valued function $f$ and hence $V\in \Om^{0,0}_\om(X,T^{1,0})$.
\end{proof}

We need now to interpret our deformation complex from the point of view of sheaf cohomology. Note that the sequence (\ref{seq:bundles})
is a sequence of holomorphic vector bundles on $X$. Denote by $\cT^{1,0}$, $\cC$ and $\cF$ the sheaves of germs of holomorphic sections of $T^{1,0}$, $T^{1,0}_E$ and $\ad E$, respectively. We then have a short exact sequence
\begin{equation}
 \label{seq:sheaf}
0 \rightarrow \cF \rightarrow \cC \rightarrow \cT^{1,0} \rightarrow 0.
\end{equation}
Denote $\cT^{1,0}_{\om}$ the subsheaf of $\cT^{1,0}$ which consists of sections that are $\om$-compatible, that is, which are local infinitesimal
symplectomorphisms. Then we obtain a short exact sequence by restriction of \eqref{seq:sheaf}
\begin{equation}
 \label{seq:sheafom}
0 \rightarrow \cF \rightarrow \cC_\om \rightarrow \cT^{1,0}_{\om} \rightarrow 0,
\end{equation}
where $\cC_\om$ is the subsheaf of $\cC$ whose projection to $\cT^{1,0}$ lies in $\cT^{1,0}_\om$. This last sequence induces a long exact sequence in \v{C}ech cohomology
\begin{equation}
\label{seq0}
 \begin{array}{ccccccccc}
0 & \rightarrow & H^0(X,\cF) & \rightarrow  & H^0(X,\cC_\om) & \rightarrow & H^0(X,\cT_{\om}^{1,0}) & \rightarrow &\\
& \rightarrow & H^{1}(X,\cF) & \rightarrow  & H^{1}(X,\cC_\om) & \rightarrow & H^{1}(X,\cT^{1,0}_{\om}) & \rightarrow &\\
& \rightarrow & H^{2}(X,\cF) & \rightarrow  & H^{2}(X,\cC_\om) & \rightarrow & H^{2}(X,\cT^{1,0}_{\om}) & \rightarrow & \ldots
\end{array}
\end{equation}
that we wish to use in order to compare the groups $H^{i}(X,\cC_\om)$ with the cohomology of $L_{\om}^*$.
The following Poincar\'e lemma for $\om$-compatible forms of degree $1$ follows easily from the Dolbeault's version of the Poincar\'e Lemma.
\begin{lemma}
 \label{lem:poincare}
Let's assume that $\gamma\in\Om_{\om}^{0,1}(T^{1,0})$ is $\delb$-closed. Then locally it can be written $\delb V$ for a local section $V$
of $\Om_{\om}^{0,0}(T^{1,0})$.
\end{lemma}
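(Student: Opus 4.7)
The plan is to apply the vector-valued Dolbeault--Grothendieck lemma to the complex $(\Om^{0,*}(T^{1,0}),\delb)$ in order to produce a local primitive $V_0\in\Gamma(T^{1,0})$ with $\delb V_0=\gamma$, and then to correct $V_0$ so that it becomes Hamiltonian. The correction will come from another application of the scalar $\delb$-Poincar\'e lemma, the $\om$-compatibility of $\gamma$ being exactly what allows one to start that second application.

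The key algebraic identity I would establish first is
\begin{equation*}
\delb(\iota_{V}\om)=\om^{\delb V}
\end{equation*}
for any smooth section $V$ of $T^{1,0}$. This is where the K\"ahler assumption $\delb\om=0$ enters crucially. A short coordinate computation with $V=V^i\partial_{z^i}$ and $\om=\om_{i\bar j}\,dz^i\wedge d\bar z^j$ gives
\begin{equation*}
\delb(\iota_{V}\om)=\bigl(\partial_{\bar k}V^i\,\om_{i\bar j}+V^i\partial_{\bar k}\om_{i\bar j}\bigr)\,d\bar z^k\wedge d\bar z^j,
\end{equation*}
and the second summand cancels after antisymmetrization via the K\"ahler identity $\partial_{\bar k}\om_{i\bar j}=\partial_{\bar j}\om_{i\bar k}$. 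What remains matches, up to the normalization inherent in the convention $\sigma^\gamma=(\sigma(\gamma,\cdot))^{skw}$, the coordinate expression of $\om^{\delb V}$.

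With this identity in hand, set $\sigma_0:=\iota_{V_0}\om\in\Om^{0,1}$. Since $\gamma=\delb V_0$ is $\om$-compatible, $\om^\gamma=0$, and the identity above yields $\delb\sigma_0=0$. The ordinary scalar Dolbeault--Grothendieck lemma then produces, locally, a smooth complex-valued function $f$ with $\delb f=\sigma_0=\om(V_0,\cdot)$. This is precisely the defining condition for $V_0$ to be the (complex) Hamiltonian vector field of $f$, hence $V_0\in\Om^{0,0}_\om(T^{1,0})$ locally, and the sought primitive is $V=V_0$.

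The main technical point is the careful verification of $\delb(\iota_V\om)=\om^{\delb V}$, where one must track signs and the skew-symmetrization convention in the definition of $\sigma^\gamma$; once this is settled, the argument reduces to two invocations of the standard local $\delb$-Poincar\'e lemma.
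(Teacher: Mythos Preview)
Your proposal is correct and is precisely the argument the paper has in mind. The paper gives no proof beyond the remark that the lemma ``follows easily from the Dolbeault's version of the Poincar\'e Lemma''; your two applications of the local $\delb$-lemma, linked by the identity $\delb(\iota_V\om)=\om^{\delb V}$, supply exactly those omitted details. The same identity (in the guise that the $\om$-dual of $V$ is $\delb$-closed whenever $\om^{\delb V}=0$) is invoked in the paper's proof of Lemma~\ref{lem:compare2}, so your key step is fully consistent with the surrounding text.
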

We can now proof the Dolbeault isomorphisms for the groups we are ultimately interested in.
\begin{lemma}
\label{lem:dolbom}
We have the following isomorphisms :
\begin{enumerate}
\item $ H^0(X,\cT_{\om}^{1,0}) \simeq H^{0,0}_{\om}(X, T^{1,0})$, $H^1(X,\cT_{\om}^{1,0}) \simeq H^{0,1}_{\om}(X, T^{1,0})$
\item $H^0(X,\cC_\om)\simeq H^0(X,L_{\om}^*)$, $H^1(X,\cC_\om)\simeq H^1(X,L_{\om}^*)$
\end{enumerate}
\end{lemma}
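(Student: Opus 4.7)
The approach I would take is the standard Dolbeault/abstract-de Rham argument: for each of the two complexes I would sheafify, verify fineness, identify the degree-zero kernel with the correct subsheaf, and establish local exactness in degree one. Fineness is automatic in both cases because the sheaves in question are $C^\infty$-modules and therefore admit partitions of unity.

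For (1), the kernel of $\dbar\colon\Omega^{0,0}_\om(T^{1,0})\to\Omega^{0,1}_\om(T^{1,0})$ is, by construction, the sheaf of local $\om$-compatible holomorphic vector fields, namely $\cT^{1,0}_\om$: locally, an infinitesimal symplectomorphism is Hamiltonian via the ordinary Poincar\'e lemma, matching the pointwise definition of $\Omega^{0,0}_\om(T^{1,0})$. Local exactness at $\Omega^{0,1}_\om(T^{1,0})$ is exactly the content of Lemma~\ref{lem:poincare}. The abstract de Rham isomorphism then yields $H^i(X,\cT^{1,0}_\om)\simeq H^{0,i}_\om(X,T^{1,0})$ for $i=0,1$.

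For (2), I would first use the connection $A$ to split the Atiyah sequence \eqref{seq:bundles} as a $C^\infty$ direct sum $T^{1,0}_E\cong T^{1,0}\oplus\ad P$. Under this splitting, a direct computation shows that the Dolbeault operator $\dbar_E$ on $T^{1,0}_E$-valued forms takes the form $\dbar_E(\gamma,\b)=(\dbar\gamma,\,\dbar_A\b+\tfrac{i}{2}F_A^\gamma)$, which is precisely our $\tilde{\del}$ in \eqref{eq:extension}. Consequently, $L^*_\om$ sheafifies to a complex of fine sheaves whose degree-zero kernel is the subsheaf of local holomorphic sections of $T^{1,0}_E$ whose projection to $T^{1,0}$ is $\om$-compatible; by the very definition in \eqref{seq:sheafom}, this is $\cC_\om$.

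The crux is local exactness of $\tilde{\del}$ at $L^1_\om$. Given a closed $(\a,\b)\in L^1_\om$, Lemma~\ref{lem:poincare} produces a local $V\in\Omega^{0,0}_\om(T^{1,0})$ with $\dbar V=\a$; subtracting $\tilde{\del}(V,0)$ reduces the problem to solving $\dbar_A\xi=\b-\tfrac{i}{2}F_A^V$ locally. Its obstruction is $\dbar_A\bigl(\b-\tfrac{i}{2}F_A^V\bigr)$, and using $\dbar_A\b=-\tfrac{i}{2}F_A^\a$ from closedness one reduces it to checking the graded-Leibniz identity $\dbar_A(F_A^V)=F_A^{\dbar V}$, which follows from the Bianchi identity $\dbar_A F_A=0$ (valid because $(J,A)\in\cP$ implies $F_A$ is of type $(1,1)$). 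The ordinary Dolbeault Poincar\'e lemma for $\ad P$-valued $(0,1)$-forms then provides the local $\xi$, completing exactness. The main technical obstacle I anticipate is the sign and type bookkeeping that verifies $\dbar_A(F_A^V)=F_A^{\dbar V}$ and the identification of $\tilde{\del}$ with $\dbar_E$ under the Atiyah splitting; once these are in hand, the abstract de Rham theorem concludes both parts.
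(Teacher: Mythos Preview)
Your proposal is correct and follows essentially the same strategy as the paper: sheafify the complexes, invoke Lemma~\ref{lem:poincare} for local exactness in degree one, and conclude via the standard acyclic-resolution machinery (the paper phrases this via the \v{C}ech--Dolbeault double complex rather than fine resolutions, but this is an inessential variation). Your treatment of part (2) is in fact more explicit than the paper's, which simply asserts that it ``follows in an analogue fashion''; note, however, that the Leibniz identity you need carries a sign (compare the proof of Lemma~\ref{lem:extension}), though as you remark this is bookkeeping, and in any case closedness of $(0,\beta-\tfrac{i}{2}F_A^V)$ follows directly from $\tilde{\del}^2=0$ without recomputing it.
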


\begin{proof}
 The proof relies on the exactness of the following sequences of sheaves, which follows from Lemma \ref{lem:poincare}.
\begin{equation}
 \label{seq:resol}
0 \rightarrow \cT_{\om}^{1,0} \hookrightarrow \Om_{\om}^{0,0}(T^{1,0}) \lra{\delb} \Om_{\om}^{0,1}(T^{1,0}) \lra{\delb} \Om_{\om}^{0,2}(T^{1,0})
\end{equation}
$$
0 \rightarrow \cC_\om \hookrightarrow \Om_{\om}^{0,0}(T^{1,0}_E)\simeq L^0_{\om} \lra{\tilde{\del}} \Om_{\om}^{0,1}(T^{1,0}_E)\simeq L^1_{\om}  \lra{\tilde{\del}} \Om_{\om}^{0,2}(T^{1,0}_E)\simeq L^2_{\om}.
$$
We discuss $(1)$, as $(2)$ follows in an analogue fashion. Consider the double complex $\check{C}^i(\mathfrak{U},\Om^{0,j}_{\om}(T^{1,0}))$ of \v{C}ech cochains of $\Om^{0,j}_{\om}(T^{1,0})$
with respect to some sufficiently fine cover $\mathfrak{U}$ of $X$. It is endowed with two differentials, the \v{C}ech differential
$\delta$ and the differential on forms induced by $\delb$. These two differential commutes.
Moreover, $(\check{C}^*(\mathfrak{U},\cT_{\om}^{1,0}), \delta)$ computes $H^*(X,\cT_{\om}^{1,0})$ and
$(\Om^{0,*}_{\om}(T^{1,0}), \delb)$ computes $H^*(X,L_{\om}^*)$. Then from the exactness of (\ref{seq:resol}) we deduce the isomorphisms in $(1)$ from standard theory on double complexes (see e.g. \cite{voi}).
\end{proof}

We prove now the main result of this section. For this, note that the group of holomorphic automorphisms $\Aut E$ of $E$ acts on $\Omega^{0,*}(T_E^{1,0})$, inducing a structure of $K^c$-equivariant complex on $L^*$ via the natural inclusion $K \subset \Aut E$.

\begin{proposition}
\label{prop:compare}
There is a $K^c$-equivariant homomorphism
$$
H^1(X,L_{\om}^*)\rightarrow H^1(X,L^*).
$$
Assume that $H^{0,2}(X,\ad E)=0$. Then if $h^{0,1}(X)=0$ this morphism is injective and if $h^{0,2}(X)=0$ it is surjective.
\end{proposition}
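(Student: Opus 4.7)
The plan is to reinterpret the desired homomorphism via \v{C}ech cohomology of sheaves and chase the commutative diagram of long exact sequences induced by the natural morphism of short exact sequences \eqref{seq:sheafom} $\to$ \eqref{seq:sheaf}, with the identity on $\cF$ and the evident inclusions on the other terms. By the Dolbeault isomorphisms of Lemma \ref{lem:dolbom}, this produces the desired map $H^1(X,L_\om^*) \to H^1(X,L^*)$. Its $K^c$-equivariance will follow from the $K$-equivariance of the sheaf inclusion $\cC_\om \hookrightarrow \cC$ (the group $K \subset \ctG$ preserves $\om$, hence $\om$-compatibility), combined with the standard fact that a $K$-representation on a finite-dimensional complex vector space extends uniquely to a holomorphic $K^c$-representation, matching the natural $K^c$-action on $H^1(X,L^*)$ induced by $K^c \subset \Aut E$.

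Under the hypothesis $H^{0,2}(X,\ad E) = H^2(X,\cF) = 0$, both relevant portions of the long exact sequences truncate to
$$
H^1(X,\cF) \to H^1(X,\cC_\om) \to H^1(X,\cT^{1,0}_\om) \to 0,
$$
and its analogue with $\cT^{1,0}$, $\cC$ in place of $\cT^{1,0}_\om$, $\cC_\om$. The vertical map $H^1(X,\cT^{1,0}_\om) \to H^1(X,\cT^{1,0})$ is exactly the one analysed in Lemma \ref{lem:compare2}, the remaining vertical maps being the identity on $\cF$ and the inclusion of interest on $\cC$.

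For surjectivity under $h^{0,2}(X) = 0$, Lemma \ref{lem:compare2} gives surjectivity of the vertical map on $H^1(\cT^{1,0}_\bullet)$. Given $x \in H^1(X,\cC)$, project to $H^1(X,\cT^{1,0})$, lift to $H^1(X,\cT^{1,0}_\om)$, then lift further to $H^1(X,\cC_\om)$ using $H^2(X,\cF) = 0$; the result agrees with $x$ modulo the image of $H^1(X,\cF) \to H^1(X,\cC)$, which by the identity on $\cF$ already factors through $H^1(X,\cC_\om)$, so $x$ lies in the image. For injectivity under $h^{0,1}(X) = 0$, Lemma \ref{lem:compare2} gives injectivity on $H^1(\cT^{1,0}_\bullet)$, and a diagram chase reduces the claim to surjectivity of $H^0(X,\cT^{1,0}_\om) \to H^0(X,\cT^{1,0})$.

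The main subtle point of the plan is this last surjectivity: it amounts to showing that every global holomorphic vector field $V$ on the compact K\"ahler manifold $X$ is Hamiltonian when $h^{0,1}(X) = 0$. This is a standard Hodge-theoretic argument: since $\om$ is closed and $\cL_V \om$ is of type $(1,1)$, the contraction $\iota_V \om$ is a $\dbar$-closed $(0,1)$-form, so $h^{0,1}(X) = 0$ makes it $\dbar$-exact, yielding the required Hamiltonian potential. Granting this, a final chase shows that any $x \in H^1(X,\cC_\om)$ killed in $H^1(X,\cC)$ comes from some $y \in H^1(X,\cF)$ via the top row; $y$ then lies in the image of the bottom connecting map $H^0(X,\cT^{1,0}) \to H^1(X,\cF)$, hence by the surjectivity just noted also in the image of the top connecting map $H^0(X,\cT^{1,0}_\om) \to H^1(X,\cF)$, so $x = 0$ by exactness.
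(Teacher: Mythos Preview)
Your proof is correct and follows essentially the same approach as the paper: both set up the commutative diagram of long exact sequences arising from the morphism of short exact sequences \eqref{seq:sheafom}~$\to$~\eqref{seq:sheaf}, invoke Lemma~\ref{lem:dolbom} and Lemma~\ref{lem:compare2}, and conclude by a diagram chase under the hypothesis $H^{0,2}(X,\ad E)=0$. The paper constructs the map at the Dolbeault level via the inclusion $\Om^{0,1}_{\om}(T_E^{1,0}) \hookrightarrow \Om^{0,1}(T_E^{1,0})$ rather than at the sheaf level, and simply asserts $H_\om^{0,0}(X,T^{1,0}) = H^{0,0}(X,T^{1,0})$ when $h^{0,1}(X)=0$ without spelling out the Hodge-theoretic argument you supply; your explicit chase and justification of this degree-zero surjectivity make the argument more self-contained but do not depart from it.
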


\begin{proof}
From standard Dolbeault's Theory we have
$$
H^i(X,\cF)\simeq H^{0,i}(X,\ad E), \qquad H^i(X,\cC)\simeq H^{0,i}(X,T_E^{1,0}),
$$
$$
H^i(X,\cT^{1,0})\simeq H^{0,i}(X,T^{1,0}).
$$
Furthermore, from \eqref{seq:sheaf} and the isomorphisms above we have a long exact sequence
\begin{equation}
\label{seq1}
 \begin{array}{cccccc}
H^{0,0}(X,\ad E) & \hookrightarrow  & H^{0,0}(X,T_E^{1,0}) & \rightarrow & H^{0,0}(X,T^{1,0}) & \rightarrow \\
H^{0,1}(X,\ad E) & \rightarrow  & H^{0,1}(X,T_E^{1,0}) & \rightarrow & H^{0,1}(X,T^{1,0}) & \rightarrow \\
H^{0,2}(X,\ad E) & \rightarrow  & H^{0,2}(X,T_E^{1,0}) & \rightarrow & H^{0,2}(X,T^{1,0}) & \rightarrow ...
 \end{array}
\end{equation}
Then, from the sequence \eqref{seq:sheafom} and Lemma \ref{lem:dolbom} we also obtain
\begin{equation}
\label{seq2}
 \begin{array}{cccccc}
H^{0,0}(X,\ad E) & \hookrightarrow  & H^0(X,L_{\om}^*) & \rightarrow & H_{\om}^{0,0}(X,T^{1,0}) & \rightarrow \\
H^{0,1}(X,\ad E) & \rightarrow  & H^1(X,L_{\om}^*) & \rightarrow & H_{\om}^{0,1}(X,T^{1,0}) & \rightarrow \\
H^{0,2}(X,\ad E) & \rightarrow  & H^2(X,\cC_\om) ...
 \end{array}
\end{equation}
By definition, there is a $K$-equivariant injection $\Om^{0,1}_{\om}(T_E^{1,0}) \hookrightarrow \Om^{0,1}(T_E^{1,0})$ from which we obtain the following $K^c$-equivariant map in cohomology
$$
H^1(X,L_{\om}^*) \rightarrow H^{0,1}(X,T_E^{1,0}) = H^1(X,L^*).
$$
Consider now
$$
 H^{0,1}_{\om}(X,T^{1,0})\rightarrow H^{0,1}(X,T^{1,0}).
$$
From Lemma~\ref{lem:compare2}, if $h^{0,2}(X)=0$ this is a surjective map, and if $h^{0,1}(X)=0$ this is injective. Moreover, in this case $H_{\om}^{0,0}(X,T^{1,0})=H^{0,0}(X,T^{1,0})$. Then, by the assumption $H^{0,2}(X,\ad E)=0$, the result follows, comparing the sequences (\ref{seq1}) and (\ref{seq2}) and doing a standard diagram chasing argument.
\end{proof}

\section{Examples}\label{sec:example}

In this section we apply our theory in two different situations. In any case, we start with a solution $(\omega,H)$ of \eqref{eq:CKYM0} on a triple $(X,L,E)$, given by a holomorphic vector bundle $E$ over a polarized complex manifold $(X,L)$. Then, we use Theorem \ref{theo:finitestab} and Proposition \ref{prop:compare} to construct deformations of the given solution, compatible with $\omega$. The first case is simpler, as we only consider deformations of the bundle $E$, but lead us to new solutions of the equations. For the second case we deform also the polarized manifold, building on the non-trivial solutions of the coupled equations constructed by Keller and T{\o}nnesen-Friedman \cite{KT}.

\subsection{Rank $2$ bundles over product surfaces}
Consider $X=\PP_1\times \PP_1$ polarised by $L = \cO(1,1)$. Here, we use the standard notation $\cO(p,q) = \pi_1^*\cO(p) \otimes \pi_2^*\cO(q)$, where $\pi_j$ denotes the projection on each factor. This has a natural structure of homogeneous K\"ahler manifold endowed with the product of the Fubini-Study metrics $\omega = \omega_1 \times \omega_2$, as considered in \cite[Section 5.2]{AGG}. Given $U_1,U_2$ homogeneous line bundles over $X$ we consider the split rank two bundle
$$
E= U_1 \oplus U_2.
$$
The discussion in \cite[Section 5.2]{AGG} implies that there exists a Hermite-Einstein metric $H$ on $E$ such that $(\omega,H)$ is a solution of the coupled equations for any $\alpha > 0$.

We specify to the cases
$$
U_1 = U_2^* = \cO(1,-1) \quad \textrm{or} \quad U_1 = \cO(-2,0),\; U_2 = \cO(0,-2),
$$
where the first situation corresponds to the reducible point in the moduli of ASD connections in \cite[p. 242]{DK}. In the two examples considered above, we have
$$
\End E = \cO_X \oplus \cO(2,-2) \oplus \cO(-2,2) \oplus \cO_X
$$
and hence by K\"unneth formula and Serre's duality
\begin{align*}
H^1(X,\End E)&= H^0(\PP_1,\cO(2))\oplus H^0(\PP_1,\cO(2)),\\
H^2(X,\End E)& = 0.
\end{align*}
Moreover, using $h^{0,1}(X)=h^{0,2}(X)=0$ and $H^{0,1}(X,T^{1,0})=0$ by Lemma \ref{lem:compare2} and \eqref{seq2} we obtain a  $K^c$-equivariant surjection
$$
H^1(X,\End E) \twoheadrightarrow H^1(X,L_{\om}^*).
$$
The group $K^c$ is given by the following exact sequence
\begin{equation}
\label{eq:groupex1}
1\rightarrow \CC^*\times\CC^*\rightarrow K^c \rightarrow PGL_2(\CC)\times PGL_2(\CC)\rightarrow 1,
\end{equation}
where each $\CC^*$ comes from multiplication on each factor of $E$. Since the $PGL_2(\CC)$ actions lift naturally to $E$, this sequence is split. At the level of Lie algebras, this induces an exact sequence
$$
0\rightarrow H^{0,0}(X,\End E) \to H^{0,0}(X,T_E^{1,0})  \rightarrow  H^{0,0}(X,T^{1,0}) \rightarrow 0.
$$
By proposition~\ref{prop:compare}, we obtain therefore $K^c$-equivariant isomorphisms
$$
H^1(X,\End E) \rightarrow H^1(X,L_{\om}^*)\rightarrow H^1(X,L^*).
$$
Hence, each point in $H^1(X,\End E)$ which is polystable under the $K^c$-action will induce a polystable point in $H^1(X,L_{\om}^*)$ and, by $H^2(X,End E)=0$, a deformation of $(X,L,E)$ of the form $(X,L,E_t)$ with new solutions of the CKYM equations (if close enough to $0$). To find polystable points, we note that the $K^c$-action on $H^1(X,\End E)$ is given by
$$
\begin{array}{ccc}
K^c\times H^{0,1}(X,\End E) & \rightarrow & H^{0,1}(X,\End E)\\
((\lambda_1,\lambda_2,g_1,g_2),(x,y)) & \mapsto & (\lambda_2^{-1}\lambda_1g_1 \cdot x,\lambda_1^{-1}\lambda_2 g_2 \cdot y)
\end{array}
$$
where $g_1\cdot x$ and $g_2\cdot y$ denote the standard linear representation of $PGL_2(\CC)$ on $H^0(\PP_1,\cO(2))$.
Then, the polystable points are the elements $(x,y)$ with $x$ and $y$ non-zero and with simple zeroes, so we conclude that there are stable infinitesimal deformations close to zero.

\subsection{Deformation of Keller--T{\o}nnesen-Friedman solutions}\label{subsec:ex2}

We apply our theory to the solutions constructed in \cite[\S 3]{KT}, that crucially have $\alpha >0$. We need the following result.

\begin{lemma}
\label{lem:defX}
Suppose that $E$ is a line bundle and $h^{0,2}(X)=0$. Then,
\begin{enumerate}
\item any stable point $v\neq 0$ under the action of $Isom_0(g)^c$ in $H^{0,1}(X,TX^{1,0})$ provides a stable point $w\neq 0$ under the $K^c$-action in $H^1(X,L_{\om}^*)$
\item if the above point $v$ correspond to an unobstructed deformation, so does $w$.
\end{enumerate}
\end{lemma}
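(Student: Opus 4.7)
The plan is to construct a $K^c$-equivariant surjection $\pi\colon H^1(X,L_\om^*)\twoheadrightarrow H^{0,1}(X,T^{1,0})$ and lift $v$ through a reductive splitting. Since $E$ is a line bundle, $\ad E$ is trivial, so $H^{0,2}(X,\ad E)=h^{0,2}(X)=0$ by hypothesis, placing us in the setting of Proposition \ref{prop:compare}. Combining the long exact sequence \eqref{seq2} from its proof (which collapses to a surjection $H^1(X,L_\om^*)\to H_\om^{0,1}(X,T^{1,0})$ because $H^{0,2}(X,\ad E)=0$) with the surjectivity part of Lemma \ref{lem:compare2} (which uses $h^{0,2}(X)=0$) produces $\pi$.

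Next I would analyze the group action. Because $G=U(1)$ is abelian, the stabilizer $K\subset \ctG$ is a central extension of the group of Hamiltonian holomorphic isometries of $(X,J,\om)$ by the constant $U(1)$-gauge transformations, and the central $\CC^*$ in $K^c$ acts trivially on $L^1_\om$ (trivial adjoint representation and trivial underlying diffeomorphism); hence the $K^c$-action on $H^1(X,L_\om^*)$ descends to the action of a subgroup of $\Isom_0(g)^c$ compatibly with $\pi$. By reductivity of $\Isom_0(g)^c$, Maschke yields an invariant splitting $H^1(X,L_\om^*) = U\oplus\ker\pi$ with $\pi|_U$ an equivariant isomorphism. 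Defining $w\in U$ as the preimage of $v$, the $K^c$-orbit of $w$ maps isomorphically via $\pi|_U$ onto the orbit of $v$, so closedness in $H^{0,1}(X,T^{1,0})$ is carried to closedness in $H^1(X,L_\om^*)$, establishing (1).

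For (2), unobstructedness of $v$ provides an analytic family $X_t$ of deformations of $X$ with tangent $v$; the obstruction to extending it to a family of pairs $(X_t,E_t)$ lies in $H^{0,2}(X,\ad E)=0$, so such a lift exists (and similarly for $L$). Using Moser's trick to enforce $\om$-compatibility and the Kuranishi slice of Proposition \ref{prop:slice} to pull back into $H^1(X,L_\om^*)$, I would obtain an analytic curve of lifts of $v$ whose initial velocity, up to a $K^c$-translation absorbing the kernel component, coincides with $w$, witnessing the unobstructedness of $w$.

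The main obstacle will be verifying that stability genuinely transfers through $\pi|_U$, since a closed orbit under a group need not remain closed under a subgroup. I would need to check that the image of $K^c$ in $\Isom_0(g)^c$ is large enough to carry the stability hypothesis, which in the Keller--T{\o}nnesen-Friedman examples (where $b_1(X)=0$) reduces to identifying Hamiltonian and holomorphic isometries; additionally, care must be taken that the trivial central $\CC^*$ in $K^c$ is interpreted as a harmless factor when comparing stabilizers, since strict stability in the discrete-stabilizer sense must be read modulo this center.
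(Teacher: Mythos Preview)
Your construction of the equivariant surjection $\pi\colon H^1(X,L_\om^*)\to H^{0,1}(X,T^{1,0})$ is correct and matches the paper's map $\phi$, and your treatment of part (2) is essentially the same as the paper's.

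The divergence is in part (1). You lift $v$ via a $K^c$-invariant splitting $U\oplus\ker\pi$ and then try to transport closedness of the orbit through the equivariant isomorphism $\pi|_U$. As you yourself flag, this only identifies the $K^c$-orbit of $w$ with the $\psi(K^c)$-orbit of $v$, and the hypothesis concerns the larger $\Isom_0(g)^c$-orbit; without knowing that $\psi\colon K^c\to\Isom_0(g)^c$ is surjective (which is not among the lemma's assumptions), closedness need not transfer. So the argument as written has a genuine gap in the general case.

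The paper sidesteps this entirely with a different idea: take \emph{any} preimage $w'\in\phi^{-1}(v)$, and let $w$ be the unique polystable point in the closure $\overline{K^c\cdot w'}$ (which exists since $K^c$ is reductive acting linearly). Equivariance and continuity give $\phi(w)\in\overline{\psi(K^c)\cdot v}\subset\overline{\Isom_0(g)^c\cdot v}=\Isom_0(g)^c\cdot v$, the last equality because $v$ has closed orbit; hence $\phi(w)\neq 0$ and $w\neq 0$. This argument uses only that $\psi(K^c)\subset\Isom_0(g)^c$, not equality, and requires no splitting. Your approach would work once you add the surjectivity of $\psi$ as a hypothesis (and it does hold in the examples of \S\ref{subsec:ex2}), but the paper's route proves the lemma as stated.
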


\begin{proof}
By hypothesis we have $H^{0,2}(X,\End E)=H^{0,2}(X,\cO) = 0$. Now, by proposition~\ref{prop:compare} we have a surjective map
$H^1(X,L_{\om}^*)\rightarrow H^1(X,L^*)=H^{0,1}(X,T_E^{1,0})$ and hence the morphism
$$
H^{0,1}(X,C) \twoheadrightarrow H^{0,1}(X,TX^{1,0})
$$
is surjective by \eqref{seq1}. Thus, by composition there is a surjective morphism
$$
\phi : H^1(X,L_{\om}^*) \twoheadrightarrow H^{0,1}(X,TX^{1,0}).
$$
Considering the homomorphism $\psi : K \rightarrow Isom_0(g)$, by Proposition \ref{prop:compare} the map $\phi$ is equivariant, that is,
\begin{equation}
\label{eq:equiv}
\phi(a\cdot x)=\psi(a)\cdot \phi(x)\qquad \forall \; (a,x)\in  K^c\times H^1(X,L_{\om}^*).
\end{equation}
Assume that there is $v\in H^{0,1}(X,TX^{1,0})$ which is polystable under the $Isom_0^c(g)$-action. By surjectivity of $\phi$ there exists $w'\in H^1(X,L_{\om}^*)$ in the preimage of $v$ by $\phi$ and hence a point $w$ in the closure of the $K^c$-orbit of $w'$ which is polystable. This point is not zero as $\phi$ is equivariant and $v\neq 0$ has a closed $Isom_0^c(g)$-orbit.
\par
To prove $(2)$, note that $\phi(w)$ is in the orbit of $v$. By $Isom_0^c(g)$-equivariance of the Kuranishi slice, as $v$ is unobstructed, so is $\phi(w)$.  Then $w\in H^1(X,L_{\om}^*)$ covers an unobstructed infinitesimal deformation of $X$. By standard theory (see e.g. \cite{ks1}), from $h^{0,2}(X)=0$ we deduce that $E$ and $L$ admit simultaneous deformations that cover the deformation of $X$, that is, $w$ is unobstructed.
\end{proof}

We now apply this result to the examples in \cite[\S 3]{KT}. Let $\Sigma_i$ be a pair of Riemann surfaces of genus $g_i$ and consider
$$X:=\PP(\cL_1\oplus \cL_2) \lra{\pi} \Sigma_1\times \Sigma_2=:S$$
with $\cL_i$ the pull back of a line bundle of positive degree on $\Sigma_i$. In \cite{KT}, Keller and T{\o}nnesen-Friedman showed that for suitable choices of $\cL_i$, there exists a polarization $L$ on $X$ and a line bundle $E \to X$ which admits a solution $(\omega,H)$ of the CKYM equations \eqref{eq:CKYM0}.

To construct new solutions we apply Theorem \ref{theo:finitestab} combined with Lemma \ref{lem:defX}, so we require $h^{0,2}(X)=0$. From the Gysin sequence applied to the above $\PP^1$-fibration we obtain a short exact sequence
$$
0 \rightarrow H^2(S) \rightarrow H^2(X) \rightarrow H^0(S) \rightarrow 0,
$$
where the map $H^2(S) \rightarrow H^2(X)$ is induced by pull-backs. Then for $h^{0,2}(X)$ to vanish it is necessary that $h^{0,2}(S)=0$ and from K\"unneth this forces one of the factors in $S$, $\Sigma_1$ say, to be $\PP^1$.

Assume that $\Sigma_2$ has genus greater than two, so we are in the situation considered in \cite[Ex. 3.5]{KT} (the same calculation applies if we set $\Sigma_2$ to be a torus and consider \cite[Ex. 3.4]{KT}). We assume that $\cL_1$ and $\cL_2$ are the pull-backs of $\cO(1)$ and $\cK^{1/2}$, respectively, where $\cK$ is the canonical line bundle on $\Sigma_2$. By Lemma \ref{lem:defX}, it is enough to consider infinitesimal unobstructed deformations corresponding to closed orbits in $H^{0,1}(X,TX^{1,0})$ under the $Isom_0(g)^c$-action. The metric $g$ is \emph{admissible} and thus its isometry group is given by (see \cite[\S 2]{ap})
$$
Isom_0(g)^c\simeq\CC^*\times \tilde{G}^c,
$$
where the $\CC^*$ factor is generated by the Euler vector field on the fibers, while $\tilde{G}^c$ is a finite cover of $SL(2,\CC)$ that comes from the lift of the complexification of infinitesimal hamiltonian isometries on the surface $S$.
\par
Given a deformation $F_t$ of  $F_0=\cL_1\oplus \cL_2$, we consider deformations of $X$ of the following type
$$
\PP(F_t)\rightarrow S,
$$
parametrized by
$$
H^1(S,\End_0 (\cL_1\oplus \cL_2)))=H^1(S,\cO)\oplus H^1(S,\cL_1^{-1}\otimes \cL_2 )\oplus H^1(S,\cL_2^{-1}\otimes \cL_1).
$$
Here, the subscript $0$ means trace-free endomorphisms. It remains to show that there are closed $Isom_0(g)^c$-orbits in $H^1(S,\End_0(\cL_1\oplus \cL_2)))$. The following identities follow from K\"unneth's formula and Serre duality
\begin{align*}
H^1(S,\cO) & = \(H^{0}(\PP_1,\cO(-2))\otimes H^{0}(\Sigma,\cO)\) \oplus \(H^{0}(\PP_1,\cO)\otimes H^{0}(\Sigma,\cK)\)\\
&= H^{0}(\PP_1,\cO)\otimes H^{0}(\Sigma,\cK),
\end{align*}
and it follows from this that $\CC^*\times \tilde{G}^c$ acts trivially on the left hand side. Now, from
\begin{align*}
H^1(S,\cL_1^{-1}\otimes \cL_2 ) & = \(H^0(\PP_1, \cO(-1))\otimes H^1(\Sigma_2, \cK^{1/2})\)\\
& \oplus \(H^0(\PP_1,\cO(-1))\otimes H^0(\Sigma_2, \cK^{1/2})\)
\end{align*}
we obtain $H^1(S,\cL_1^{-1}\otimes \cL_2 )=0$. Similarly, we can calculate
$$
H^1(S,\cL_2^{-1}\otimes \cL_1)\neq 0,
$$
with $\tilde{G}^c$ action denoted by $(\gamma,y) \mapsto \gamma\cdot y$. Using these isomorphisms, the full action reads
$$
\begin{array}{ccc}
\CC^*\times \tilde{G}^c \times H^1(S,\cO)\oplus  H^1(S,\cL_2^{-1}\otimes \cL_1)  & \rightarrow & H^1(S,\cO)\oplus  H^1(S,\cL_2^{-1}\otimes \cL_1)  \\
((\lambda,\gamma), (x,y)) & \mapsto & (x, \lambda \gamma\cdot y )
\end{array}
$$
and hence the stable points are of the form $(x,0)$ with $x\neq 0$. These correspond to infinitesimal deformations of a line bundle and are of the form $t \mapsto \cL_t$ where $\cL_0=\cK^{1/2}$, that is
$$
\PP(F_t)=\PP(\cO(1)\oplus \cL_t).
$$
Note that $\cL_t=\cL_0\otimes L_t$ where $L_t$ is a flat line bundle. By Lemma \ref{lem:defX}, these deformations induce deformations $(X_t,L_t,E_t)$ of $(X,L,E)$ endowed with solutions of the CKYM equations.

\end{document}